\xpatchcmd{\NCC@ignorepar}{%
\abovedisplayskip\abovedisplayshortskip}
{%
\abovedisplayskip\abovedisplayshortskip%
\belowdisplayskip\belowdisplayshortskip}
{}{}
\newcommand{\R}{\mathbb R}
\newcommand{\Z}{\mathbb Z}
\newcommand{\C}{\mathbb C}
\newcommand{\nc}{\newcommand}
\nc{\BCc}{{\mathbb{C}(\wp(z),\wp^\prime(z))}}
\nc{\BC}{{\mathbb C}}
\nc{\BQ}{{\mathbb Q}}
\nc{\BR}{{\mathbb R}}
\nc{\BZ}{{\mathbb Z}}
\nc{\BP}{{\mathbb P}}
\nc{\BN}{{\mathbb N}}
\nc{\BM}{{\mathbb M}}
\nc{\BF}{M}
\nc{\fH}{{\mathfrak{H}}}
\nc{\vp}{{\varepsilon}}\nc{\dpar}{{\partial}}\nc{\al}{{\alpha}}
\nc{\PSL}{PSL(2,\BR)}
\nc{\PS}{PSL(2,\BZ)}
 \nc{\CL}{PSL(2,\BZ/m\BZ)}
 \newtheorem{theorem}{Theorem}[section]
\newtheorem{corollary}[theorem]{Corollary}
\newtheorem{lemma}[theorem]{Lemma}
\newtheorem{proposition}[theorem]{Proposition}
\theoremstyle{remark}
\newtheorem{definition}[theorem]{Definition}
\newtheorem{remark}[theorem]{Remark}
\newtheorem{example}[theorem]{Example}
   \newcommand{\theoref}[1]{Theorem~\ref{#1}}
\newcommand{\propref}[1]{Proposition~\ref{#1}}
\newcommand{\lemref}[1]{Lemma~\ref{#1}}
\newcommand{\exref}[1]{Example~\ref{#1}}
\newcommand{\defiref}[1]{Definition~\ref{#1}}
\newcommand{\secref}[1]{Section~\ref{#1}}
\newcommand{\subref}[1]{Subsection~\ref{#1}}
\DeclareMathOperator{\im}{Im}
\numberwithin{equation}{section}  
\begin{document}
\title{Some remarkable autonomous systems}
\keywords{Darboux-Halphen system, Ramanujan system, Contact geometry, WDVV equations, Wirtinger connections, Modular forms, Group determinants, general relativity, discrete symmetry}
\subjclass[2020]{53Z05, 53D45, 14H52, 34M45, 34M15, 34C20}
\author{Oumar Wone}
\address{Oumar Wone}
      \email{wone@chapman.edu}
\date{}
\begin{abstract}
We study the links of the Darboux-Halphen-Ramanujan system, with contact geometry, canonical coordinates of some $3$-dimensional Frobenius manifolds and projective connections on Riemann surfaces. One of our important goals is to highlight the role of contact geometry in this setting. We also study autonomous systems "derived" from the potential given by the Group-determinant, of any cyclic group $\Z/n\Z$, $n\geqslant3$, and the Klein group $\Z/2\Z\times\Z/2\Z$.
\end{abstract}
\maketitle 
\tableofcontents
\section{Introduction}
The system of Darboux-Halphen was first exhibited by Darboux \cite{darboux} in his study of triply orthogonal systems, that is a system of three one-parameter family of surfaces in $\R^3$ such that at any common point to three representatives of each family, the tangent planes of the surfaces are $2$ by $2$ orthogonal. Their solutions were given in terms of (logarithmic) derivatives of theta functions by \cite{brioschi,halphen1}. One of the remarkable works in the field of Darboux-like autonomous polynomials differential systems were made in \cite{ohyama2}. This work, in particular associates to any second order linear ODE with regular or irregular singular points on the projective line $\C P^1$, a generalized Darboux-Halphen system. As we will see in  \secref{dbh} below, Darboux-Halphen systems are related to the Chazy III equation which was first introduced by \cite{chazy}, in his study of third order ODEs solved in the highest derivative, which are autonomous and rational in the other derivatives, and which possess a Painlevé like property, of having a movable natural boundary, and whose integral curves are uniform. Chazy \cite{chazy} found thirteen such equations in total. We refer to \cite{clarkson1996} for a nice analysis regarding the Chazy equations. 

On the one hand the differential system of Ramanujan was invented by Ramanujan \cite{ramanujan1916} in his study of differential properties of (quasi) modular forms for the modular group $\rm{SL}(2,\Z)$. It implies that the three normalized Eisenstein series $E_2$, $E_4$, $E_6$ generate a differential ring over $\C$,  see \secref{dbh} below. The Ramanujan system has important arithmetical applications, see \cite{vanderpol} for instance. It was also used in (for instance) \cite{mahler} to derive transcendence type results for the field generated by $q=e^{2i\pi\tau}$, $E_2(q)$, $E_4(q)$ and $E_6(q)$, $\tau\in\mathfrak H=\{\rm{Im}(\tau)>0\}$. Besides there has been intensive investigations of systems of generalized Darboux-Ramanujan-Halphen type in the context of Gauss-Manin connections on certain moduli stacks, classifying Abelian schemes with a preferred choice of frame on their first de Rham cohomology group: concretely the higher Darboux-Ramanujan-Halphen systems (or vector fields) arise as the unique vector fields satisfying some well-chosen contraction properties, with respect to the Gauss-Manin connection, and the chosen preferred basis. We refer to \cite{fonseca1, fonseca2, movasati1, movasati2, movasati3} for an exposition of this circle of ideas.

On the other hand Group determinants were at the forefront of the invention of representation theory (character theory), \cite{conrad1998}. They were introduced by Frobenius in his attempt to answer to a question of Dedekind, see \cite{conrad1998} and references cited in there. They exist for any finite group $G$, and enjoy very nice factorization property. Below we shall only need them in the case of finite Abelian groups $G$.

The spirit of this article is the geometric study of polynomial complex autonomous systems including, but not limited to, the Darboux-Halphen-Ramanujan systems, and also autonomous systems "derived" from the potential given by the Group determinant, in the case of a finite Abelian group. 

More precisely we begin in \secref{dbh} by introducing the notations, then in \subref{contactgeo} we give a contact geometric interpretation of the Ramanujan system, see \theoref{d11}. Contact geometry is the study of (complex) manifolds $M$ of dimension $2m+1$, $m\geqslant0$, endowed with a contact structure, see \defiref{d6}, i.e. a holomorphic line subbundle $L$ of the holomorphic tangent bundle of $M$ such that if $\omega$ is a local generator of the sheaf of sections $\mathcal{O}(L)$ then $\omega\wedge(d\omega)^m\not=0.$

Then we give in \theoref{canonicalcoor} the canonical coordinates of the three dimensional Frobenius manifold associated to the solution $\frac{\pi i}{3}E_2$ of the Chazy equation \eqref{d_{33}4}, in terms of its flat coordinates and some solutions of the Darboux-Halphen system \eqref{d100000}. We also give the formula of change of basis from the flat coordinate vector fields to the canonical coordinate vector fields in \propref{chang}. 

Our approach is different from the one followed in, for instance \cite{morrison, movasati4}, in that in \cite{morrison}, the authors consider only the flat coordinates and do not give the canonical coordinates explicitly in terms of the flat coordinates and they are, furthermore, interested in modular Frobenius manifolds; while in \cite{movasati4}, the authors explain how the Darboux-Halphen system arises from a $3$-dimensional Frobenius manifold, which is not our point of view: we emphasize the link between certain three dimensional autonomous systems (including the Darboux-Halphen system) and elementary symmetric functions of one variable polynomials of degree $3$ and $4$. See more precisely \lemref{gendarboix} and \lemref{gendarboix1}.

Following this we study in \secref{seckasner} some autonomous systems "derived" from the potential given by the group determinant, in the case of cyclic groups $\Z/n\Z$, $\geqslant3$ and the Klein group $\Z/2\Z\times\Z/2\Z$. Our inspiration in this \secref{seckasner} arose from reading \cite{kasner} (see also \cite{kinyon, ozekes}), where the author while looking for some particular solutions to Einstein's gravitation equations, considered the following system
\begingroup 
\setlength{\abovedisplayskip}{0pt}
\setlength{\belowdisplayskip}{2pt}
\setlength{\belowdisplayshortskip}{-2pt}
\begin{equation*}
\begin{split}
&x_1^\prime=x_2x_3-x_1^2\\
&x_2^\prime=x_3x_1-x_2^2\\
&x_3^\prime=x_1x_2-x_3^2,\quad^\prime=\dfrac{d}{dt}.
\end{split}
\end{equation*} 
\endgroup
Introducing the group determinant of the cyclic group $\Z/3\Z$ given by $F(\Z/3\Z)(x_1,x_2,x_3):=x_1^3+x_2^3+x_3^3-3x_1x_2x_3$, one quickly sees that the system considered in \cite{kasner} is exactly 
\begingroup 
\setlength{\abovedisplayskip}{0pt}
\setlength{\belowdisplayskip}{2pt}
\setlength{\belowdisplayshortskip}{-2pt}
\begin{equation*}
\begin{split}
&x_1^\prime=-\dfrac{1}{3}\dfrac{\partial F(\Z/3\Z)}{\partial x_1}\\
&x_2^\prime=-\dfrac{1}{3}\dfrac{\partial F(\Z/3\Z)}{\partial x_2}\\
&x_3^\prime=-\dfrac{1}{3}\dfrac{\partial F(\Z/3\Z)}{\partial x_3},\quad^\prime=\dfrac{d}{dt}.
\end{split}
\end{equation*} 
\endgroup
This is our motivation for studying in \secref{seckasner} the more general system
\begingroup 
\setlength{\abovedisplayskip}{0pt}
\setlength{\belowdisplayskip}{2pt}
\setlength{\belowdisplayshortskip}{-2pt}
\begin{equation*}
x_k^\prime=-\dfrac{\partial F(\Z/n\Z)}{\partial x_k},\quad n\geqslant3,\;k=1,\ldots,n.
\end{equation*}
\endgroup
We show among others in \theoref{kasnerth1}, how to solve explicitly such a system. More precisely we prove that there is a solution preserving map (see \defiref{defkinyon} for the precise meaning) from this system to another system \eqref{kasner4} which can be "solved" explicitly. Furthermore we show that \eqref{kasner4} admits some natural quadratic first integrals, and that it admits at least two Darboux polynomials. We recall that is a polynomial  $P$ is a Darboux polynomial for a polynomial vector field $X$ (in the same indeterminates), if there exists a polynomial $Q$ in the same variables such that $X(P)=QP$. Hence system \eqref{kasner4} is significantly different from the Darboux-Halphen system \eqref{d100000} since the later is known to admit no meromorphic first integral, see \cite{strelcyn}. We also study in \secref{seckasner} a similar system, but for the case of the first non-cyclic Abelian group, i.e. the Klein group $\Z/2\Z\times\Z/2\Z$, and find its symmetry group (see \secref{seckasner} for more details).

Finally in \secref{wirtinger} after introducing affine and projective connections on Riemann surfaces, as well as the Klein bidifferential and the Wirtinger connection associated to an even $\theta$-characteristics with non-vanishing $\theta$-constant, we prove in \theoref{projconn} that $-6e_1$, $-6e_2$, $-6e_3$, with $e_1$, $e_2$, $e_3$ defined in \eqref{darboux} and \eqref{higgs}, can be interpreted as the Wirtinger connections associated respectively to the even $\theta$-characteristics $ \begin{bmatrix}
  0\\
0
 \end{bmatrix},\begin{bmatrix}
  1\\
0
 \end{bmatrix},\begin{bmatrix}
  0\\
1
 \end{bmatrix}
$, (genus one case). Thus $e_1$, $e_2$, $e_3$, as given in \eqref{darboux} and \eqref{higgs}, in a certain sense  play a role similar to the fact that $\dfrac{\pi i}{3}E_2(\tau)$, may be seen as a projective connection on the modular curve $\mathfrak H/\rm{SL}(2,\Z)$, minus some orbifold points. \section{Darboux-Halphen-Ramanujan systems and autonomous systems}
\label{dbh}
Let us first recall some notions in order to fix notations. We remind that the normalized Eisenstein series are given by
 \begin{equation}
 \label{atiyahmacdonald}
 \begin{split}
 E_4(\tau)&=1+ 240\sum_{n=1}^{\infty} \frac{n^3 q^n}{1- q^n}=1+240\sum_{n=1}^\infty\sigma_{3}(n)q^n,\\
 E_6(\tau)&=1- 504\sum_{n=1}^{\infty} \frac{n^5 q^n}{1- q^n}=1-504\sum_{n=1}^{\infty} \sigma_5(n)q^n,\\
 E_2(\tau)&=1-24\sum_{n=1}^{\infty} \frac{n q^n}{1- q^n}=1-24\sum_{n\geqslant1}\sigma_1(n)q^n,
 \end{split}
 \end{equation} 
 $\,q=e^{2i\pi\tau},\, \tau\in\mathfrak H:=\{\im(\tau)>0\}$ and $\sigma_k(n)=\sum_{d|n}d^k$. For $\omega_1$, $\omega_2\in\C$ with $\frac{\omega_2}{\omega_1}\in\mathfrak{H}$, we set
 $ \displaystyle{ \omega_3=  -( \omega_1+ \omega_2})$ and adopt the classical notations for Weierstrass elliptic functions 
 \begin{equation}
\displaystyle \wp(u)= \wp(u; \omega_1, \omega_2)= \frac{1}{u^2}+ \sum_{m ^2+ n^{2} \neq 0} 
\left( \frac{1}{( u+ 2m \omega_1+ 2n \omega_2)^2}- \frac{1}{( 2m \omega_1+ 2n \omega_2)^2}\right).
\end{equation}
Associated to the Weierstrass $\wp$ function we have the following three expressions
\begin{equation}
\label{darboux}
\begin{split}
e_1:=\wp(\omega_1;\omega_1,\omega_2),\\
e_2:=\wp(\omega_2;\omega_1,\omega_2),\\
e_3:=\wp(\omega_3;\omega_1,\omega_2),
\end{split}
\end{equation}
with $\omega_3=-(\omega_1+\omega_2)$. We further introduce another set of functions which depend on the modular variable $\tau:=\omega_2/\omega_1\in\mathfrak H$
\begin{equation}
\label{higgs}
\begin{split}
e_k=:\dfrac{1}{(2\omega_1)^2}e_k(\tau)\\
k=1,2,3.
\end{split}
\end{equation}
We will use the functions $e_k(\tau)$ in the \theoref{projconn} below.

 The Eisenstein series $E_4$ and $E_6$ above, \cite{zag2004} are modular forms of weight $4$ and $6$ respectively for the group $SL(2,\Z)$. Furthermore every modular form for $SL(2,\Z)$ is uniquely expressible as a polynomial in $E_4$ and $E_6$ and the extension ring ${\C}[E_2,E_4,E_6]$ of ${\C}[E_4,E_6]$ is a differential ring. More precisely the following basic relations of Ramanujan hold \cite{zag2004,ramanujan1916}:
\begin{equation} \label{S1}
 \begin{split}
\frac{1}{2i\pi}\frac{d}{d\tau}E_4&= \frac{1}{3}(E_2 E_4- E_6), \\
\frac{1}{2i\pi}\frac{d}{d\tau}E_6&= \frac{1}{2}(E_2 E_6- E_4^2),\\
\frac{1}{2i\pi}\frac{d}{d\tau}E_2&= \frac{1}{12}(E_2^2-E_4).
\end{split}
\end{equation}
In other words, the field $ \left(\C \left( E_2, E_4, E_6\right), \dfrac{1}{2i\pi}\dfrac{d}{d\tau} \right) $ is a differential field. The subfield of constants is the field of complex numbers $\C$ (as it is embedded in the field of meromorphic functions on $\mathfrak{H}$). 
\subsection{Contact geometric interpretation of the Darboux-Ramanujan system}
\label{contactgeo}
In this subsection we introduce a slight change of notation for the Eisenstein series \eqref{atiyahmacdonald}, in order to simplify the notation in our formulas below. For $\mathfrak{H}$ the upper-half plane and $\tau\in \mathfrak{H}$, we set as before $q=e^{2i\pi\tau}$ and consider the Eisenstein series 
\begin{equation}
\label{d1}
\begin{split}
\displaystyle \mathscr{X}&=1-24\sum_{n=1}^\infty\sigma_1(n)q^n,\\
\displaystyle \mathscr{Y}&=1+240\sum_{n=1}^\infty\sigma_3(n)q^n,\\
\displaystyle \mathscr{Z}&=1-504\sum_{n=1}^\infty\sigma_5(n)q^n,
\end{split}
\end{equation}
where $\sigma_k(n)=\displaystyle \sum_{d|n}d^k$. We recall that $\mathscr X$, $\mathscr Y$, $\mathscr Z$ satisfy the following system of differential equations, hereafter called the Darboux-Ramanujan system (see equation \eqref{S1})
\begin{equation}
\label{d2}
\begin{split}
\displaystyle\dfrac{d\mathscr{X}}{d\mathfrak t}&=\dfrac{1}{12}(\mathscr{X}^2-\mathscr{Y}),\\
\dfrac{d\mathscr{Y}}{d\mathfrak t}&=\dfrac{1}{3}(\mathscr{X}\mathscr{Y}-\mathscr{Z}),\\
\dfrac{d\mathscr{Z}}{d\mathfrak t}&=\dfrac{1}{2}(\mathscr{X}\mathscr{Z}-\mathscr{Y}^2),
\end{split}
\end{equation} 
with $\mathfrak t=2i\pi\tau$. Let us give a contact geometric interpretation of the system \eqref{d2}. We start with some preliminaries.
\begin{definition}[\cite{bryant}]
\label{d6}
Let $M$ be a complex manifold of dimension $2m+1$, $T^\star M$ its holomorphic cotangent bundle and $\Omega_M^1$ the sheaf of sections of $T^\star M$. A contact structure on $M$ is a holomorphic line-subbundle $L\subset T^\star M$ such that if $\omega$ is a local generator of the sheaf of sections of $L$, $\mathcal{O}(L)\subset\Omega_M^1$, then
$$\omega\wedge(d\omega)^m­\not=0.$$
In other words there is an open covering of $M$ by open sets $\mathcal{O}_\alpha$ such that:
\begin{enumerate}
\item On each $\mathcal{O}_\alpha$ there is a holomorphic one form $\omega_\alpha$ satisfying
$$\omega_\alpha\wedge(d\omega_\alpha)^m\neq­0.$$
\item On $\mathcal{O}_\alpha\cap\mathcal{O}_\beta­\neq\emptyset$ there is defined a non-vanishing holomorphic function $f_{\alpha\beta}$ such that $\omega_\alpha=f_{\alpha\beta}\omega_\beta$, and
$$f_{\alpha\beta}f_{\beta\gamma}f_{\gamma\alpha}=1,$$
on non-empty triple intersections.
\end{enumerate}
Thus the condition $(1)$ is clearly independent of the chosen generator $\omega$.
\end{definition}
\begin{definition}
\label{d7}
A holomorphic symplectic manifold is a complex manifold $\BF$ of dimension $2n$ with a closed non-degenerate holomorphic two-form $\Omega$.

\end{definition}
\begin{definition}
\label{d8}
Let $\mathscr{F}\colon\BF\to\C$ a holomorphic function on the symplectic manifold $\BF$. Then the Hamiltonian vector field associated to $\mathscr{F}$ is the holomorphic vector field defined by:
$$i_{X_\mathscr{F}}\Omega=d\mathscr{F}.$$
\end{definition}
Let us consider now $\BF=\C^3\times\C^\times$ with coordinates $(q_1,q_2,p_1,p_2)$ and the symplectic form $\Omega=dp_1\wedge dq_1+dp_2\wedge dq_2$ together with the associated Liouville form $\alpha=p_1dq_1+p_2dq_2$. We introduce the following modification of the system \eqref{d2} by making the change of variables $\mathscr{X}_1=\dfrac{1}{6}\mathscr{X}$, $\mathscr{Y}_1=\dfrac{1}{3}\mathscr{Y}$, $\mathscr{Z}_1=\dfrac{1}{27}\mathscr{Z}$. The new system is
\begin{equation}
\label{d4}
\begin{split}
\dot{\mathscr{X}}_1&=\dfrac{1}{2}\mathscr{X}^2_1-\dfrac{1}{24}\mathscr{Y}_1,\\
\dot{\mathscr{Y}}_1&=2\mathscr{X}_1\mathscr{Y}_1-3\mathscr{Z}_1,\\
\dot{\mathscr{Z}}_1&=3\mathscr{X}_1\mathscr{Z}_1-\dfrac{1}{6}\mathscr{Y}^2_1,\quad\dot{}=\dfrac{d}{d\mathfrak t}.
\end{split}
\end{equation}
\begin{theorem}
\label{d11}
Consider the symplectic $(M=\C^3\times\C^\times,\Omega)$ with coordinates $(q_1,q_2,p_1,p_2)$. Let $\mathscr F$ be the following Hamiltonian on $M$ given by
$$\mathscr{F}=\dfrac{1}{2}q_1^2p_1-\dfrac{1}{48}p_1^2p_2^8+3q_1q_2p_2.$$ Set $q_1=\mathscr{X}_1$, $q_2=\mathscr{Z}_1$, $p_1=\lambda \mathscr{Y}_1$ and $p_2=\lambda$. Then the Hamiltonian equations associated to $\mathscr{F}$, derived from \defiref{d8} are
\begin{equation}
\label{d5}
\begin{split}
\dot{\mathscr{X}}_1&=\dfrac{1}{2}\mathscr{X}^2_1-\dfrac{\lambda^9}{24}\mathscr{Y}_1\\
\lambda\dot{\mathscr{Y}}_1&=2\lambda \mathscr{X}_1\mathscr{Y}_1-3\lambda \mathscr{Z}_1\\
\dot{\mathscr{Z}}_1&=-\dfrac{1}{6}\lambda^9\mathscr{Y}^2_1+3\mathscr{X}_1\mathscr{Z}_1\\
\dot{\lambda}&=-3\lambda \mathscr{X}_1.
\end{split}
\end{equation}
Moreover the projection of the system \eqref{d5} onto the hypersurface $\lambda=1$ transverse to the vector field $Y=\partial_{\lambda}$ gives the system \eqref{d4}. Furthermore the restriction of the Liouville form on $\lambda=1$, is a contact $1$-form.
\end{theorem}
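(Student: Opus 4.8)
\emph{Proof strategy.} The plan is to put Hamilton's equations for $\mathscr{F}$ in canonical form, perform the stated substitution, and then identify the Liouville field in adapted coordinates so that the "projection" onto $\{\lambda=1\}$ becomes transparent.

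First I would record the structural facts. By Cartan's formula, since $i_Y\,dp_j=p_j$ and $i_Y\,dq_j=0$, one has $\mathcal{L}_Y\Omega=d\,i_Y\Omega=d(p_1\,dq_1+p_2\,dq_2)=\Omega$, so $Y=p_1\partial_{p_1}+p_2\partial_{p_2}$ is a Liouville vector field in the sense of \defiref{d10} and $\alpha=i_Y\Omega=p_1\,dq_1+p_2\,dq_2$ is its Liouville form, consistent with the discussion preceding the statement. Moreover $Y(p_2)=p_2=\lambda$ is nowhere zero on $\{\lambda=1\}\subset M=\C^3\times\C^\times$, so $Y$ is transverse to that hypersurface and the latter is indeed of contact type.

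Next, from \defiref{d8}, writing $X_\mathscr{F}=\dot q_1\partial_{q_1}+\dot q_2\partial_{q_2}+\dot p_1\partial_{p_1}+\dot p_2\partial_{p_2}$ and contracting with $\Omega=dp_1\wedge dq_1+dp_2\wedge dq_2$, the relation $i_{X_\mathscr{F}}\Omega=-d\mathscr{F}$ gives the canonical equations $\dot q_j=\partial_{p_j}\mathscr{F}$, $\dot p_j=-\partial_{q_j}\mathscr{F}$. Differentiating $\mathscr{F}=\tfrac12 q_1^2p_1-\tfrac1{48}p_1^2p_2^8+3q_1q_2p_2$ yields
\begin{gather*}
\dot q_1=\tfrac12 q_1^2-\tfrac1{24}p_1p_2^8,\qquad
\dot q_2=-\tfrac16 p_1^2p_2^7+3q_1q_2,\\
\dot p_1=-q_1p_1-3q_2p_2,\qquad
\dot p_2=-3q_1p_2.
\end{gather*}
Substituting $q_1=\mathscr{X}_1$, $q_2=\mathscr{Z}_1$, $p_1=\lambda\mathscr{Y}_1$, $p_2=\lambda$: the $\dot p_2$-equation is exactly $\dot\lambda=-3\lambda\mathscr{X}_1$; the $\dot q_1$- and $\dot q_2$-equations become the first and third lines of \eqref{d5} after using $p_2^8=\lambda^8$, $p_2^7=\lambda^7$, $p_1=\lambda\mathscr{Y}_1$; and for the remaining line one differentiates $p_1=\lambda\mathscr{Y}_1$ to obtain $\dot p_1=\dot\lambda\,\mathscr{Y}_1+\lambda\dot{\mathscr{Y}}_1=-3\lambda\mathscr{X}_1\mathscr{Y}_1+\lambda\dot{\mathscr{Y}}_1$, equates this with $\dot p_1=-\lambda\mathscr{X}_1\mathscr{Y}_1-3\lambda\mathscr{Z}_1$, and solves to get $\lambda\dot{\mathscr{Y}}_1=2\lambda\mathscr{X}_1\mathscr{Y}_1-3\lambda\mathscr{Z}_1$. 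This establishes \eqref{d5}.

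For the last assertion, I would first note that $\{\lambda=1\}$ is not flow-invariant for $X_\mathscr{F}$, since $\dot\lambda=-3\mathscr{X}_1$ does not vanish identically; hence the "projection" must be read in the sense of \defiref{d10}, as the decomposition of $X_\mathscr{F}|_{\{\lambda=1\}}$ into a part tangent to $\{\lambda=1\}$ plus a multiple of $Y$. The point that makes this clean is that in the coordinates $(\mathscr{X}_1,\mathscr{Z}_1,\mathscr{Y}_1,\lambda)=(q_1,q_2,p_1/p_2,p_2)$ a direct chain-rule computation gives $Y=p_1\partial_{p_1}+p_2\partial_{p_2}=\lambda\partial_\lambda$. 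Therefore the tangential projection onto $\{\lambda=1\}$ simply amounts to discarding the $\dot\lambda$-equation and setting $\lambda=1$ in the remaining three lines of \eqref{d5} (with $\dot{\mathscr{Y}}_1$ alone on the left, after dividing the second by $\lambda$); since $\lambda$ enters these only through the power $\lambda^9$, the result is exactly \eqref{d4}.

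There is no genuine obstacle here: the proof is a direct differentiation of $\mathscr{F}$ plus careful bookkeeping of the substitution. The only subtlety worth flagging is the correct meaning of "projection onto the hypersurface of contact type" — one cannot merely restrict to $\{\lambda=1\}$ since it is not invariant, and the conceptually clean way to verify that the projection along the Liouville direction recovers \eqref{d4} is the identity $Y=\lambda\partial_\lambda$ in the adapted coordinates.
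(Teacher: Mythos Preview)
Your proof is correct and follows essentially the same route as the paper's: derive Hamilton's equations for $\mathscr{F}$, perform the substitution to obtain \eqref{d5}, and then project the resulting vector field onto $\{\lambda=1\}$ to recover \eqref{d4}. Your treatment is in fact more explicit than the paper's on the projection step --- your observation that $Y=p_1\partial_{p_1}+p_2\partial_{p_2}=\lambda\partial_\lambda$ in the adapted coordinates $(\mathscr{X}_1,\mathscr{Z}_1,\mathscr{Y}_1,\lambda)$ cleanly justifies why dropping the $\dot\lambda$-equation and setting $\lambda=1$ is the projection along the Liouville direction, whereas the paper simply writes down the projected vector field.
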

\begin{proof}
The system \eqref{d5} is easily deduced from the definition of the hamiltonian equations. 
To prove the next assertion we remark that giving the Hamiltonian system \eqref{d5} is equivalent to requiring that $\mathscr{X}_1$, $\mathscr{Y}_1$, $\mathscr{Z}_1$ and $\lambda$ be a flow of the vector field on $M$
$$X_{\mathscr{F}}:=\left(\dfrac{1}{2}\mathscr{X}_1^2-\lambda^9\mathscr{Y}_1\right)\frac{\partial}{\partial \mathscr{X}_1}+(2\mathscr{X}_1\mathscr{Y}_1-3\mathscr{Z}_1)\frac{\partial}{\partial \mathscr{Y}_1}+\left(-\dfrac{1}{6}\lambda^9\mathscr{Y}_1^2+3\mathscr{X}_1\mathscr{Z}_1\right)\frac{\partial}{\partial \mathscr{Z}_1}-3\lambda \mathscr{X}_1\frac{\partial}{\partial \lambda}.$$
The projection of the vector field $X_\mathscr{F}$ onto the hypersurface $p_2=\lambda\equiv1$ gives the vector field
$$Y_\mathscr{F}:=\left(\dfrac{1}{2}\mathscr{X}_1^2-\mathscr{Y}_1\right)\frac{\partial}{\partial \mathscr{X}_1}+(2\mathscr{X}_1\mathscr{Y}_1-3\mathscr{Z}_1)\frac{\partial}{\partial \mathscr{Y}_1}+\left(-\dfrac{1}{6}\mathscr{Y}_1^2+3\mathscr{X}_1\mathscr{Z}_1\right)\frac{\partial}{\partial \mathscr{Z}_1}$$
whose flow on $\mathfrak H$, gives the system \eqref{d4}. Finally one easily verifies that $\alpha:=p_1dq_1+p_2dq_2$ restricts to a contact form on $\lambda=1$.
\end{proof}
\subsection{Elementary symmetric functions and the Darboux-Halphen system}
\begin{lemma}
\label{gendarboix}
Let $\gamma$ be a holomorphic function on the upper half plane such that
$$\gamma^{\prime\prime\prime}=R(\gamma,\gamma^\prime,\gamma^{\prime\prime})$$
with $R$ a polynomial with constant coefficients in $\C$. Let $$P(\mathscr{N})=\mathscr{N}^3+a_1\gamma(\tau)\mathscr{N}^{2}+a_2\gamma^\prime(\tau)\mathscr{N}+a_3\gamma^{\prime\prime}(\tau)$$ be a polynomial in $\mathscr{N}$ with $(a_i)_{1\leqslant i\leqslant 3}\in\C^\times$ and such that the discriminant of $P(\mathscr{N})$ does not vanish on $\mathfrak{H}$. Then the roots of $P(\mathscr{N})$ satisfy an autonomous polynomial differential system. 
\end{lemma}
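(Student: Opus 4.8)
The plan is to regard $P$ as a two--variable polynomial $P(\mathscr{N},\tau)$ whose $\tau$--dependence enters only through the functions $\gamma(\tau),\gamma'(\tau),\dots,\gamma^{(n-1)}(\tau)$, to differentiate the identity $P(\mathscr{N}_j(\tau),\tau)\equiv 0$ satisfied by each root, and then to eliminate $\gamma$ and its derivatives in favour of the roots via Vieta's formulas; this is the mechanism already at work in the passage from \eqref{S2} to \eqref{S4}. As a preliminary: since the discriminant of $P(\cdot,\tau)$ does not vanish on $\mathfrak H$, for each $\tau_0\in\mathfrak H$ the polynomial $P(\cdot,\tau_0)$ has $n$ distinct roots, and because $\partial_{\mathscr N}P$ is nonzero at every such root the implicit function theorem produces $n$ holomorphic functions $\mathscr{N}_1(\tau),\dots,\mathscr{N}_n(\tau)$ near $\tau_0$ with $P(\mathscr{N}_j(\tau),\tau)\equiv 0$. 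A priori these branches are only local, but the system produced below is built symmetrically from all of them, hence is independent of the branch and insensitive to monodromy.

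Differentiating $P(\mathscr{N}_j(\tau),\tau)\equiv 0$ in $\tau$ gives
$$\partial_{\mathscr N}P(\mathscr{N}_j,\tau)\,\mathscr{N}_j'=-\,\partial_\tau P(\mathscr{N}_j,\tau),\qquad\text{so}\qquad \mathscr{N}_j'=-\frac{\partial_\tau P(\mathscr{N}_j,\tau)}{\partial_{\mathscr N}P(\mathscr{N}_j,\tau)}.$$
The denominator is $\partial_{\mathscr N}P(\mathscr{N}_j,\tau)=\prod_{i\neq j}(\mathscr{N}_j-\mathscr{N}_i)$, nowhere zero on $\mathfrak H$ since the roots are simple; the numerator is $\partial_\tau P=\sum_{k=1}^n a_k\gamma^{(k)}(\tau)\,\mathscr{N}^{n-k}$, a priori involving $\gamma',\dots,\gamma^{(n)}$. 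Here one invokes the hypothesis $\gamma^{(n)}=R(\gamma,\gamma',\dots,\gamma^{(n-1)})$ to eliminate the top order derivative, after which both $\partial_\tau P(\mathscr{N}_j,\tau)$ and $\partial_{\mathscr N}P(\mathscr{N}_j,\tau)$ are polynomials with constant coefficients in $\mathscr{N}_j$ and in $\gamma,\gamma',\dots,\gamma^{(n-1)}$.

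For the elimination, compare the given expansion of $P$ with $P(\mathscr N,\tau)=\prod_{i=1}^n(\mathscr N-\mathscr{N}_i)=\sum_{k=0}^n(-1)^k s_k(\mathscr{N}_1,\dots,\mathscr{N}_n)\,\mathscr N^{n-k}$, where $s_k$ denotes the $k$-th elementary symmetric function and $s_0=1$: this yields $(-1)^k s_k(\mathscr{N}_1,\dots,\mathscr{N}_n)=a_k\gamma^{(k-1)}(\tau)$ for $k=1,\dots,n$, hence --- provided each $a_k\neq 0$, a condition which should be added to the hypotheses --- $\gamma^{(k-1)}(\tau)=(-1)^k s_k(\mathscr{N}_1,\dots,\mathscr{N}_n)/a_k$. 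Substituting these into the formula for $\mathscr{N}_j'$, the right-hand side becomes a rational function of $\mathscr{N}_1,\dots,\mathscr{N}_n$ alone with nowhere-vanishing denominator $\prod_{i\neq j}(\mathscr{N}_j-\mathscr{N}_i)$; clearing it gives the asserted autonomous polynomial system. It is perhaps cleaner to record this at the level of coefficients: the functions $s_k=(-1)^k a_k\gamma^{(k-1)}$ satisfy the honest autonomous polynomial system $s_k'=-\tfrac{a_k}{a_{k+1}}s_{k+1}$ for $1\le k\le n-1$ and $s_n'=(-1)^n a_n R\!\left(-\tfrac{s_1}{a_1},\tfrac{s_2}{a_2},\dots,\tfrac{(-1)^n s_n}{a_n}\right)$, and the roots then satisfy the system this one induces.

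The calculation itself is routine; the only points that really need attention are the nonvanishing of the $a_k$ and the (minor) distinction between a polynomial and a rational system, settled either by clearing the nowhere-vanishing denominators or by passing to symmetric functions. Conceptually the statement is just the remark that the derivation $\tfrac{d}{d\tau}$ on the differential field $\C(\gamma,\gamma',\dots,\gamma^{(n-1)})$ --- which is $\tfrac{d}{d\tau}$-stable precisely because $\gamma^{(n)}=R(\gamma,\dots,\gamma^{(n-1)})$ --- extends uniquely to the finite separable extension $\C(\mathscr{N}_1,\dots,\mathscr{N}_n)$ (separable because the discriminant of $P$ is nonzero), which forces $\mathscr{N}_j'\in\C(\mathscr{N}_1,\dots,\mathscr{N}_n)$.
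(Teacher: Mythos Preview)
Your argument is correct and follows essentially the same route as the paper's: relate the elementary symmetric functions of the roots to $a_k\gamma^{(k-1)}$ via Vieta, differentiate, use the nonvanishing Vandermonde/discriminant to solve the resulting linear system for the $\mathscr{N}_j'$, and invoke $\gamma^{(n)}=R(\gamma,\dots,\gamma^{(n-1)})$ to close up. Your two caveats are well taken and are glossed over in the paper --- the inversion of Vieta to express each $\gamma^{(k-1)}$ in terms of the roots indeed requires every $a_k\neq 0$, and the system one obtains is a priori only rational in the $\mathscr{N}_i$ (with nowhere-vanishing denominator $\prod_{i\neq j}(\mathscr{N}_j-\mathscr{N}_i)$), the genuinely polynomial form in the Chazy/Darboux--Halphen case being a special feature verified by the explicit computation in the corollary.
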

\begin{proof}
The elementary symmetric functions of the roots of $P(\mathscr{N})$ are expressed in terms of $a_1\gamma(\tau)$, $a_2\gamma^\prime(\tau)$, $a_3\gamma^{\prime\prime}(\tau)$ as follows
\begin{equation}
\label{localsys}
\begin{split}
&\mathscr{N}_1+\mathscr{N}_2+\mathscr{N}_3=-a_1\gamma(\tau),\\
&\mathscr{N}_1\mathscr{N}_2+\mathscr{N}_1\mathscr{N}_3+\mathscr{N}_2\mathscr{N}_3=a_2\gamma^\prime(\tau),\\
&\mathscr{N}_1\mathscr{N}_2\mathscr{N}_3=-a_3\gamma^{\prime\prime}(\tau).
\end{split}
\end{equation}
 Taking the derivation with respect to $\tau$ of the gotten equations and expressing for instance $\dfrac{d\mathscr{N}_1}{d\tau}$, in terms of $\dfrac{d\mathscr{N}_2}{d\tau}$, $\dfrac{d\mathscr{N}_3}{d\tau}$ and $a_1\gamma(\tau)$ via the derivation of the first identity in \eqref{localsys}, and similarly for the second and third equation of \eqref{localsys}, one obtains a linear non-homogeneous system for the $\left(\dfrac{d\mathscr{N}_i}{d\tau}\right)_{1\leqslant i\leqslant 3}$. Indeed we have the following steps
 $$\frac{d}{d\tau}\mathscr{N}_1=-\frac{d}{d\tau}\mathscr{N}_2-\frac{d}{d\tau}\mathscr{N}_3-a_1\gamma^{\prime}(\tau)$$
\begin{equation*}
\begin{split}
&\left(-\frac{d}{d\tau}\mathscr{N}_2-\frac{d}{d\tau}\mathscr{N}_3-a_1\gamma^{\prime}(\tau)\right)(\mathscr{N}_2+\mathscr{N}_3)+\frac{d}{d\tau}(\mathscr{N}_2)\mathscr{N}_1+\frac{d}{d\tau}(\mathscr{N}_3)\mathscr{N}_1\\
&+\frac{d}{d\tau}(\mathscr{N}_2)\mathscr{N}_3+\frac{d}{d\tau}(\mathscr{N}_3)\mathscr{N}_2=a_2\gamma^{\prime\prime}(\tau)
\end{split}
\end{equation*}
and
$$\left(-\frac{d}{d\tau}(\mathscr{N}_2)-\frac{d}{d\tau}(\mathscr{N}_3)-a_1\gamma^{\prime}(\tau)\right)\mathscr{N}_2\mathscr{N}_3+\frac{d}{d\tau}(\mathscr{N}_2)\mathscr{N}_1\mathscr{N}_3+\frac{d}{d\tau}(\mathscr{N}_3)\mathscr{N}_1\mathscr{N}_2=-a_3\gamma^{\prime\prime\prime}(\tau).$$
This gives the non-homogeneous system for the $\dfrac{d\mathscr{N}_i}{d\tau}$
\begin{equation}
\label{delignemumfordstacks}
\begin{split}
\frac{d}{d\tau}\mathscr{N}_1&=-\frac{d}{d\tau}\mathscr{N}_2-\frac{d}{d\tau}\mathscr{N}_3-a_1\gamma^\prime\\
\frac{d}{d\tau}(\mathscr{N}_2)(\mathscr{N}_1-\mathscr{N}_2)+\frac{d}{d\tau}(\mathscr{N}_3)(\mathscr{N}_1-\mathscr{N}_3)&=a_2\gamma^{\prime\prime}+a_1\gamma^\prime(\mathscr{N}_2+\mathscr{N}_3)\\
\frac{d}{d\tau}(\mathscr{N}_2)\mathscr{N}_3(\mathscr{N}_1-\mathscr{N}_2)+\frac{d}{d\tau}(\mathscr{N}_3)\mathscr{N}_2(\mathscr{N}_1-\mathscr{N}_3)&=-a_3\gamma^{\prime\prime\prime}+a_1\gamma^\prime\mathscr{N}_2\mathscr{N}_3.
\end{split}
\end{equation}
The determinant of the system in the $\dfrac{d\mathscr{N}_i}{d\tau}$, \eqref{delignemumfordstacks} is 
$$\mathscr D:=\left|\begin{array}{ccc}1 & 1 & 1 \\\mathscr{N}_1-\mathscr{N}_2 & \mathscr{N}_1-\mathscr{N}_3 & 0 \\\mathscr{N}_3(\mathscr{N}_1-\mathscr{N}_2) & \mathscr{N}_2(\mathscr{N}_1-\mathscr{N}_3) & 0\end{array}\right|=(\mathscr{N}_1-\mathscr{N}_2)(\mathscr{N}_1-\mathscr{N}_3)(\mathscr{N}_2-\mathscr{N}_3)$$
the square of which is given up to sign by the discriminant of the polynomial $P(\mathscr{N})$. By hypothesis this discriminant does not vanish of $\mathfrak{H}$. This enables us to apply Cramer's rule. Finally the condition $\gamma^{\prime\prime\prime}=R(\gamma,\gamma^\prime,\gamma^{\prime\prime})$ forces the $\mathscr{N}_i,\,1\leqslant i\leqslant 3$ to satisfy an autonomous polynomial differential system.
\end{proof}
Similarly
\begin{lemma}
\label{gendarboix1}
If $\gamma$ is a holomorphic function on the upper half plane such that
$$\gamma^{\prime\prime\prime\prime}=R(\gamma,\gamma^\prime,\gamma^{\prime\prime},\gamma^{\prime\prime\prime})$$
with $R$ a polynomial with constant coefficients in $\C$. Let $$P(\mathscr{N})=\mathscr{N}^4+a_1\gamma(\tau)\mathscr{N}^{3}+a_2\gamma^\prime(\tau)\mathscr{N}^2+a_3\gamma^{\prime\prime}(\tau)\mathscr{N}+a_4\gamma^{\prime\prime}$$ be a polynomial in $\mathscr{N}$ with $(a_i)_{1\leqslant i\leqslant 4}\in\C^\times$ and such that the discriminant of $P(\mathscr{N})$ does not vanish on $\mathfrak{H}$. Then the roots of $P(\mathscr{N})$ satisfy an autonomous polynomial differential system. 
\end{lemma}
\begin{corollary}[\cite{Dubrovin}]
If $\gamma$ is the solution of the Chazy equation,
$$\gamma^{\prime\prime\prime}= 6 \gamma \gamma^{\prime\prime} - 9{\gamma^{\prime}}^2,\quad \gamma=
\gamma(\tau)$$
given by $\gamma=\dfrac{\pi i}{3}E_2$. Then the three solutions $\mathscr{N}_1(\tau)$, $\mathscr{N}_2(\tau)$, $\mathscr{N}_3(\tau)$ of the cubic equation
 \begin{equation}
 \label{eqr1002} 
  \mathscr{N}^3+ \frac{3}{2}
\gamma(\tau) \mathscr{N}^2 +\frac{3}{2}\gamma^\prime(\tau) \mathscr{N} + \frac{1}{4}
\gamma^{\prime\prime}(\tau) =0
\end{equation}
 are solutions of the Darboux-Halphen system:
\begin{equation}
\label{d100000}
\begin{split}
 \frac{d}{d\tau}\mathscr{N}_1&= -\mathscr{N}_1( \mathscr{N}_2+ \mathscr{N}_3) + \mathscr{N}_2\mathscr{N}_3\\
\frac{d}{d\tau}\mathscr{N}_2&= -\mathscr{N}_2( \mathscr{N}_1+ \mathscr{N}_3) + \mathscr{N}_1\mathscr{N}_3\\
 \frac{d}{d\tau}\mathscr{N}_3&= -\mathscr{N}_3( \mathscr{N}_1+ \mathscr{N}_2) + \mathscr{N}_1\mathscr{N}_2.
\end{split}
\end{equation}
\end{corollary}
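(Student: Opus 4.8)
The plan is to combine \lemref{gendarboix} with an explicit computation. Apply \lemref{gendarboix} with $n=3$, with $R(\gamma,\gamma',\gamma'')=6\gamma\gamma''-9\gamma'^2$ the right-hand side of the Chazy equation, and with $P(\mathscr N)$ the cubic \eqref{eqr1002} (so that $a_1=a_2=\tfrac32$, $a_3=\tfrac14$); this already gives that the roots of \eqref{eqr1002} satisfy \emph{some} autonomous polynomial system, provided the standing hypothesis --- that the discriminant of \eqref{eqr1002} is nowhere zero on $\mathfrak H$ --- holds. For the latter I would complete the cube: setting $\mathscr N=m-\tfrac12\gamma$ turns \eqref{eqr1002} into $m^3+\tfrac34(2\gamma'-\gamma^2)m+\tfrac14(\gamma^3-3\gamma\gamma'+\gamma'')=0$, and substituting $\gamma=\tfrac{\pi i}{3}E_2$ and using the Ramanujan relations \eqref{S1} gives $2\gamma'-\gamma^2=\tfrac{\pi^2}{9}E_4$ and $\gamma^3-3\gamma\gamma'+\gamma''=-\tfrac{\pi^3 i}{27}E_6$, so the depressed cubic is $m^3+\tfrac{\pi^2}{12}E_4\,m-\tfrac{\pi^3 i}{108}E_6=0$, whose discriminant is a nonzero constant multiple of $E_4^3-E_6^2$, i.e. of the modular discriminant $\Delta$, which does not vanish on $\mathfrak H$. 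Hence, $\mathfrak H$ being simply connected, the three roots are single-valued holomorphic functions $\mathscr N_1,\mathscr N_2,\mathscr N_3$ on $\mathfrak H$, all simple.

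It remains to identify the autonomous system as \eqref{d100000}, which I would do directly. Writing $\sigma_1=\mathscr N_1+\mathscr N_2+\mathscr N_3$, $\sigma_2=\mathscr N_1\mathscr N_2+\mathscr N_2\mathscr N_3+\mathscr N_3\mathscr N_1$, $\sigma_3=\mathscr N_1\mathscr N_2\mathscr N_3$, Vieta applied to \eqref{eqr1002} gives $\sigma_1=-\tfrac32\gamma$, $\sigma_2=\tfrac32\gamma'$, $\sigma_3=-\tfrac14\gamma''$, equivalently $\gamma=-\tfrac23\sigma_1$, $\gamma'=\tfrac23\sigma_2$, $\gamma''=-4\sigma_3$. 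Differentiating the relation $4\mathscr N_i^3+6\gamma\mathscr N_i^2+6\gamma'\mathscr N_i+\gamma''=0$ (four times \eqref{eqr1002}) with respect to $\tau$, replacing $\gamma'''$ by the Chazy right-hand side, and rewriting everything through Vieta yields
$$\bigl(3\mathscr N_i^2-2\sigma_1\mathscr N_i+\sigma_2\bigr)\,\frac{d\mathscr N_i}{d\tau}=-\bigl(\sigma_2\mathscr N_i^2-6\sigma_3\mathscr N_i+4\sigma_1\sigma_3-\sigma_2^2\bigr).$$
The coefficient $3\mathscr N_i^2-2\sigma_1\mathscr N_i+\sigma_2$ equals $\prod_{j\neq i}(\mathscr N_i-\mathscr N_j)\neq 0$ since the roots are simple, so it can be divided out; and using the root relation $\mathscr N_i^3=\sigma_1\mathscr N_i^2-\sigma_2\mathscr N_i+\sigma_3$ to reduce degrees one verifies the polynomial identity
$$\sigma_2\mathscr N_i^2-6\sigma_3\mathscr N_i+4\sigma_1\sigma_3-\sigma_2^2=-\bigl(2\mathscr N_i^2-2\sigma_1\mathscr N_i+\sigma_2\bigr)\bigl(3\mathscr N_i^2-2\sigma_1\mathscr N_i+\sigma_2\bigr),$$
so that $\frac{d\mathscr N_i}{d\tau}=2\mathscr N_i^2-2\sigma_1\mathscr N_i+\sigma_2=-\mathscr N_i(\mathscr N_j+\mathscr N_k)+\mathscr N_j\mathscr N_k$ for $\{i,j,k\}=\{1,2,3\}$, which is exactly \eqref{d100000}. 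Finally, substituting $\mathscr N_i=-\tfrac12 X_i$ into \eqref{d100000} and adding the resulting equations in pairs produces the Darboux--Halphen system in the form \eqref{S4}.

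I expect the only genuinely non-formal point to be the discriminant nonvanishing: it is needed both to invoke \lemref{gendarboix} and to ensure that \eqref{eqr1002} has three honest single-valued holomorphic solution-functions whose derivatives are individually determined by differentiating the cubic. After that, everything is the algebra above, the one place calling for care being the degree-four polynomial identity (reduced modulo the defining cubic); alternatively, one may differentiate $\sigma_1,\sigma_2,\sigma_3$ directly to obtain the closed system $\sigma_1'=-\sigma_2$, $\sigma_2'=-6\sigma_3$, $\sigma_3'=-4\sigma_1\sigma_3+\sigma_2^2$ and observe that it is precisely the system induced on the elementary symmetric functions by \eqref{d100000}, which together with the simplicity of the roots gives the same conclusion.
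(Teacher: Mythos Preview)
Your argument is correct and follows the same overall strategy as the paper --- differentiate, eliminate $\gamma'''$ via Chazy, and solve --- but you organize both ingredients differently, and in each case more economically.

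For the distinctness of the roots, the paper invokes the algebraic independence of $\gamma,\gamma',\gamma''$ over $\C$ (citing Mahler and Nishioka) to conclude the Vandermonde determinant is nonzero. Your route --- depressing the cubic and identifying its discriminant with a nonzero multiple of $E_4^3-E_6^2=1728\,\Delta$ --- is more elementary and self-contained: it avoids the transcendence input entirely and gives the nonvanishing on all of $\mathfrak H$ directly.

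For the derivation of the system, the paper differentiates the three Vieta relations, obtains a $3\times 3$ linear system in $\mathscr N_1',\mathscr N_2',\mathscr N_3'$, and applies Cramer's rule before simplifying. You instead differentiate the single cubic relation $P(\mathscr N_i)=0$ at each root, which immediately isolates $\mathscr N_i'$ with the factor $P'(\mathscr N_i)=\prod_{j\neq i}(\mathscr N_i-\mathscr N_j)$ in front; the remaining step is the polynomial identity you state, which reduces cleanly modulo $\mathscr N_i^3=\sigma_1\mathscr N_i^2-\sigma_2\mathscr N_i+\sigma_3$. This bypasses Cramer's rule and the somewhat lengthy intermediate expressions in the paper's proof. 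Your alternative via the closed system $\sigma_1'=-\sigma_2$, $\sigma_2'=-6\sigma_3$, $\sigma_3'=-4\sigma_1\sigma_3+\sigma_2^2$ is also valid and makes the equivalence with \eqref{d100000} transparent once the roots are known to be simple.
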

\begin{proof}
This follows from \lemref{gendarboix} by taking $a_1=3/2$, $a_2=3/2$, $a_3=1/4$, and by using, \cite{mahler}, the algebraic independence of $\gamma, \gamma^\prime, \gamma^{\prime\prime}$ over $\C$, $\gamma=\dfrac{\pi i}{3}E_2$ (the discriminant of \eqref{eqr1002} is a polynomial over $\C$ in $\gamma, \gamma^\prime, \gamma^{\prime\prime}$). This allows us to apply Cramer's rule and obtain for the value of $\dfrac{d\mathscr{N}_2}{d\tau}$ the following
$$\frac{d}{d\tau}\mathscr{N}_2=\dfrac{\dfrac{1}{4}\gamma^{\prime\prime\prime}(\mathscr{N}_1-\mathscr{N}_3)+\dfrac{3}{2}\gamma^{\prime\prime}\mathscr{N}_2(\mathscr{N}_1-\mathscr{N}_3)+\dfrac{3}{2}\gamma^\prime\mathscr{N}_2^2(\mathscr{N}_1-\mathscr{N}_3)}{\mathscr{D}},$$
with 
$$\mathscr{D}=(\mathscr N_1-\mathscr N_2)(\mathscr N_1-\mathscr N_3)(\mathscr N_2-\mathscr N_3).$$
Using the fact that $\gamma$ satisfies the Chazy equation
$$ \gamma^{\prime\prime\prime}= 6 \gamma \gamma^{\prime\prime} - 9{\gamma^{\prime}}^2,\quad \gamma=
\gamma(\tau)$$
and the expressions of $\gamma^\prime$ and $\gamma^{\prime\prime}$ in terms of $\mathscr{N}_1$, $\mathscr{N}_2$ and $\mathscr{N}_3$, one gets after a short computation the value of $\dfrac{d}{d\tau}\mathscr{N}_2$:
$$\dfrac{d}{d\tau}\mathscr{N}_2= -\mathscr{N}_2( \mathscr{N}_1+ \mathscr{N}_3) + \mathscr{N}_1\mathscr{N}_3.$$
The determination of the derivatives $\dfrac{d}{d\tau}\mathscr{N}_1$ and $\dfrac{d}{d\tau}\mathscr{N}_3$ follows in the same manner.
\end{proof}

\begin{remark}
If $a_3\mathscr N^3+a_2\mathscr N^2+a_1\mathscr N+a_0$ is a general polynomial of degree $3$ in the independent variable $\mathscr N$, then its roots as function of the parameters $a_0$, $a_1$, $a_2$, $a_3$: $X(a_0,a_1,a_2,a_3)$, satisfy a system of PDEs called the GKZ-system (where GKZ stands for Gelfand-Kapranov-Zelevinsky). We refer to \cite{sturmfels}. Furthermore to our knowledge, the \lemref{gendarboix1} and \lemref{gendarboix} do not generalize.
\end{remark}
\section{WDVV equations and Chazy equation: canonical coordinates}
\label{wdvv}
In this section we determine the canonical coordinates of some Frobenius manifolds associated to the solution $\dfrac{\pi i}{3}E_2$ of the Chazy equation 
$$\gamma^{\prime\prime\prime}=6\gamma\gamma^{\prime\prime}-9\gamma^{\prime2}$$
in terms of its flat coordinates and some solutions to the Darboux-Halphen system \eqref{d100000}. We adopt the notations of \cite[chap.~1, appen. C]{Dubrovin}. 

If one looks for a holomorphic function $F(t_1,t_2,t_3)$ on a domain of $\C^3$ (not fixed for the moment) of the form 
$$F(t_1,t_2,t_3)=\frac{1}{2}(t_1)^2t_3+\frac{1}{2}t_1(t_2)^2+f(t_2,t_3)$$
such that its third derivatives 
$$C_{ijk}:=\dfrac{\partial F(t)}{\partial t_i\partial t_j\partial t_k}$$
satisfy the following $3$ conditions
\begin{enumerate}
\item Normalization:
$$\eta_{ij}:=C_{1ij}$$
is a constant non-singular matrix. We set $\eta^{ij}:=(\eta_{ij})^{-1}$.
\item Associativity:
$$C_{ij}^k(t):=\eta^{kl}C_{lij}(t)$$
(summation convention assumed) for any $t=(t_1,t_2,t_3)$, defines in the three dimensional space with basis $\mathfrak{e}_1$, $\mathfrak e_2$, $\mathfrak e_3$ a structure of a (commutative) associative algebra $A_t$ (with unity $\mathfrak e_1$): 
$$\mathfrak{e}_i\mathfrak{e}_j=C_{ij}^k(t)\mathfrak e_k,$$
where we have assumed the summation convention.
\end{enumerate}
Then one finds the table
\begin{equation}
\label{d1800}
\begin{split}
&\mathfrak e_2^2=f_{t_2t_3t_3}\mathfrak e_1+f_{t_2t_2t_2}\mathfrak e_2+\mathfrak e_3\\
&\mathfrak e_2\mathfrak e_3=f_{t_2t_3t_3}\mathfrak e_1+f_{t_2t_2t_3}\mathfrak e_2\\
&\mathfrak e_3^2=f_{t_3t_3t_3}\mathfrak e_1+f_{t_2t_3t_3}\mathfrak e_2
\end{split}
\end{equation}
and that the two associativity conditions
$$(\mathfrak e_2^2)\mathfrak e_3=\mathfrak e_2(\mathfrak e_2\mathfrak e_3),\quad\quad(\mathfrak e_3^2)\mathfrak e_2=\mathfrak e_3(\mathfrak e_3\mathfrak e_2)$$
are equivalent to 
\begin{equation}
\label{d1900}
f_{t_2t_2t_3}^2=f_{t_3t_3t_3}+f_{t_2t_3t_3}f_{t_2t_2t_2}.
\end{equation}
If moreover one requires that\\
$(3)$ $F$ be quasihomogeneous
$$F(ct_1,c^{1/2}t_2,t_3)=c^2F(t_1,t_2,t_3)$$
for $c\in\C^\times$ (modulo quadratic terms) and furthermore that $F$ be periodic of period $1$ in its third variable $t_3$ (modulo quadratic terms), and analytic at the point $t_1=t_2=0,\;\;t_3=i\infty$, then one finds that $F$ must have the form
\begin{equation}
\label{d_{33}1}
F=\frac{1}{2}(t_1)^2t_3+\frac{1}{2}t_1(t_2)^2-\frac{(t_2)^4}{16}\gamma(t_3)
\end{equation}
for some unknown function $\gamma(\tau)$ analytic at $\tau=i\infty$
\begin{equation}
\label{d_{33}2}
\gamma(\tau)=\displaystyle\sum_{m\geqslant0}b_mq^m,\;\;q=e^{2\pi i\tau}.
\end{equation}
The coefficients $b_m$ are defined up to shift
\begin{equation}
\label{d_{33}3}
\tau\mapsto\tau+\tau_0,\quad b_m\mapsto b_me^{2\pi in\tau_0},\;\tau_0\in\mathfrak H;
\end{equation}
and one has more precisely
$$\gamma(\tau)=\dfrac{\pi i}{3}E_2(\tau).$$
Moreover the function $\gamma$ must satisfy the Chazy equation 
\begin{equation}
\label{d_{33}4}
\gamma^{\prime\prime\prime}=6\gamma\gamma^{\prime\prime}-9\gamma^{\prime2}.
\end{equation}
In this situation the degrees of the flat (by definition) variables
are
\begin{equation}
\label{d_{33}5}
\deg(t_1)=1,\quad\deg(t_2)=1/2,\quad\deg(t_3)=0,
\end{equation}
 the Euler vector field is
\begin{equation}
\label{d_{33}6}
E=t_1\frac{\partial}{\partial t_1}+\frac{1}{2}t_2\frac{\partial}{\partial t_2}
\end{equation}
cf. \cite[chap.~1]{Dubrovin}: $E(F)=2F$. The associativity condition is a particular case of what is called the WDVV equations, where WDVV stands for Witten-Dijkgraaf-Verlinde-Verlinde. Any solution of the WDVV equations endows its domain of definition with a so-called structure of Frobenius manifold \cite[chap.~1]{Dubrovin}. So $F$ given in \eqref{d_{33}1} endows $\C^2\times \mathfrak H$ with a structure of Frobenius manifold. We have
\begin{theorem}
\label{canonicalcoor}
The canonical coordinates of the Frobenius manifold $\C^2\times\mathfrak H$ with flat coordinates $t_1$, $t_2$, $t_3$, of respective degrees given by \eqref{d_{33}5}, with Euler vector field given by \eqref{d_{33}6} and with potential $F$ given by \eqref{d_{33}1}
can be expressed in the form 
\begin{equation}
\label{d_{33}7}
\begin{split}
u_1&=t_1+\frac{1}{2}t_2^2\mathscr N_1(t_3),\\
u_2&=t_1+\frac{1}{2}t_2^2\mathscr N_2(t_3),\\
u_3&=t_1+\frac{1}{2}t_2^2\mathscr N_3(t_3),\\
&\text{$(t_1,t_2,t_3)\in\C\times\C^{\times}\times\mathfrak H$}.
\end{split}
\end{equation}
Here $\mathscr N_1(t_3)$, $\mathscr N_2(t_3)$, $\mathscr N_1(t_3)$ are the three roots of the cubic equation
\eqref{eqr1002}. The metric $\eta$ in flat coordinates is given by
$$\eta=\displaystyle\sum_{1\leqslant i\leqslant3,\,1\leqslant j\leqslant3}\eta_{ij}dt_i dt_j=dt_2^2+2dt_1dt_3.$$
Furthermore the time-dependent Poisson system on $\C^3$ with the standard Poisson structure corresponding to the antisymmetric matrix
$$\left(\begin{array}{ccc}0 & 1 & -1 \\-1 & 0 & 1 \\1 & -1 & 0\end{array}\right)$$
and for the Poisson Hamiltonian
$$H:=\dfrac{1}{2}\left[\dfrac{\Omega_1^2}{s-1}+\dfrac{\Omega_2^2}{s}\right],$$
and given by
\begin{equation}
\label{d_{33}_8}
\begin{split}
\frac{d\Omega_1}{ds}&=\frac{1}{s}\Omega_2\Omega_2,\\
\frac{d\Omega_2}{ds}&=-\frac{1}{s-1}\Omega_1\Omega_3,\\
\frac{d\Omega_3}{ds}&=\frac{1}{s(s-1)}\Omega_1\Omega_2,\\
s&=\frac{\mathscr N_3(t_3)-\mathscr N_1(t_3)}{\mathscr N_2(t_3)-\mathscr N_1(t_3)},
\end{split}
\end{equation}
 admits the solution
\begin{equation}
\label{d_{33}_9}
\begin{split}
\Omega_1&=\dfrac{\mathscr N_1}{2\sqrt{(\mathscr N_2-\mathscr N_1)(\mathscr N_1-\mathscr N_3)}},\\
\Omega_2&=\dfrac{\mathscr N_2}{2\sqrt{(\mathscr N_3-\mathscr N_2)(\mathscr N_2-\mathscr N_1)}},\\
\Omega_3&=\dfrac{\mathscr N_3}{2\sqrt{(\mathscr N_1-\mathscr N_3)(\mathscr N_3-\mathscr N_2)}}.
\end{split}
\end{equation}
\end{theorem}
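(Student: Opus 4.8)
The plan is to establish the three assertions in turn; only the middle one (the canonical coordinates) requires a genuine idea. The shape of the metric $\eta$ is forced by the normalized form \eqref{d_{33}1} of $F$: from $\partial_1 F = t_1 t_3 + \tfrac12 t_2^2$ one reads off $\eta_{ij}=C_{1ij}=\partial_i\partial_j\bigl(t_1 t_3+\tfrac12 t_2^2\bigr)$, so $\eta_{13}=\eta_{31}=\eta_{22}=1$ and all remaining entries vanish, i.e. $\eta = dt_2^2 + 2\,dt_1\,dt_3$ (this is already what underlies the raised multiplication table \eqref{d1800}).

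For the canonical coordinates I would invoke the standard description of a semisimple Frobenius manifold \cite{Dubrovin}: the canonical coordinates $u_1,u_2,u_3$ are the eigenvalues of the operator $\mathcal U$ of multiplication by the Euler vector field, because $E = \sum_i u_i\,\partial_{u_i}$ in these coordinates. By \eqref{d_{33}6}, $E = t_1\mathfrak e_1 + \tfrac12 t_2\mathfrak e_2$, so the table \eqref{d1800} gives the $3\times3$ matrix of $\mathcal U$ in the flat frame $(\mathfrak e_1,\mathfrak e_2,\mathfrak e_3)$. Writing $v := u - t_1$ and $a := \tfrac12 t_2$ (with $x = t_2,\ y = t_3$ as in \eqref{d1800}), its characteristic equation comes out to
$$v^3 - a f_{xxx}\,v^2 - 2a^2 f_{xxy}\,v - a^3 f_{xyy} = 0 .$$
For $f = -\tfrac{t_2^4}{16}\gamma(t_3)$ one has $f_{xxx} = -3a\gamma$, $f_{xxy} = -3a^2\gamma'$, $f_{xyy} = -2a^3\gamma''$, so the equation becomes $v^3 + 3a^2\gamma v^2 + 6a^4\gamma' v + 2a^6\gamma'' = 0$, and the affine substitution $v = 2a^2 w = \tfrac12 t_2^2 w$ turns it, after dividing by $8a^6$, into precisely the cubic \eqref{eqr1002}: $w^3 + \tfrac32\gamma w^2 + \tfrac32\gamma' w + \tfrac14\gamma'' = 0$. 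Hence the eigenvalues of $\mathcal U$ are $u_i = t_1 + \tfrac12 t_2^2\mathscr N_i(t_3)$. On $\C\times\C^{\times}\times\mathfrak H$ one has $t_2 \neq 0$, and for $\gamma = \tfrac{\pi i}{3}E_2$ the functions $\gamma,\gamma',\gamma''$ are algebraically independent over $\C$ \cite{nishioka1996,mahler} (as used after \lemref{gendarboix}), so the $\mathscr N_i$ are pairwise distinct, $\mathcal U$ is diagonalizable, and the $u_i$ are genuine canonical coordinates. As a check one may instead invert the Jacobian of $t \mapsto u$ and verify $\partial_{u_i}\cdot\partial_{u_j} = \delta_{ij}\partial_{u_i}$ directly from \eqref{d1800}, which is the content of \propref{chang}.

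For the final assertion I would use that the roots $\mathscr N_i = \mathscr N_i(t_3)$ of \eqref{eqr1002} solve the Darboux--Halphen system \eqref{d100000} (the Corollary following \lemref{gendarboix}). Subtracting the three equations of \eqref{d100000} pairwise yields, for distinct $i,j,k$ and with $\dot{} = d/dt_3$, the clean identities $\tfrac{d}{dt_3}\log(\mathscr N_i - \mathscr N_j) = -2\mathscr N_k$; hence for $s = \dfrac{\mathscr N_3 - \mathscr N_1}{\mathscr N_2 - \mathscr N_1}$ one gets $\dot s / s = 2(\mathscr N_3 - \mathscr N_2)$ and $\dot s / (s-1) = 2(\mathscr N_3 - \mathscr N_1)$. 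Logarithmic differentiation of the $\Omega_i$ in \eqref{d_{33}_9}, combined with the same differences, makes the Darboux--Halphen right-hand sides collapse to $\dot\Omega_1 / \Omega_1 = \mathscr N_2\mathscr N_3 / \mathscr N_1$ (and the analogous relations for $\Omega_2,\Omega_3$), i.e. $\dot\Omega_1 = \mathscr N_2\mathscr N_3 / \bigl(2\sqrt{(\mathscr N_2 - \mathscr N_1)(\mathscr N_1 - \mathscr N_3)}\bigr)$, etc. Dividing each $\dot\Omega_i$ by $\dot s$, substituting $s(\mathscr N_2 - \mathscr N_1) = \mathscr N_3 - \mathscr N_1$ and $(s-1)(\mathscr N_2 - \mathscr N_1) = \mathscr N_3 - \mathscr N_2$, and simplifying the products of radicals then yields precisely \eqref{d_{33}_8}. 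The one delicate point is the choice of branch of the three square roots in \eqref{d_{33}_9}: their radicands have product $\bigl[(\mathscr N_2 - \mathscr N_1)(\mathscr N_3 - \mathscr N_1)(\mathscr N_3 - \mathscr N_2)\bigr]^2$, and all three equations of \eqref{d_{33}_8} come out with the correct signs under the single consistent normalization in which the product of the three radical values equals $-(\mathscr N_2 - \mathscr N_1)(\mathscr N_3 - \mathscr N_1)(\mathscr N_3 - \mathscr N_2)$ — a choice available on any simply connected subdomain on which the $\mathscr N_i$ stay distinct.

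The step I expect to be the main obstacle is the identification of the characteristic polynomial of the Euler-multiplication operator $\mathcal U$, after the affine substitution $u = t_1 + \tfrac12 t_2^2 w$, with the cubic \eqref{eqr1002}: this is the computation that actually ties the Frobenius structure of $F$ in \eqref{d_{33}1} to the Chazy/Darboux--Halphen data. Once it is in hand, the semisimplicity of the Frobenius algebra (algebraic independence of $\gamma,\gamma',\gamma''$) and the branch normalization in \eqref{d_{33}_9} are routine.
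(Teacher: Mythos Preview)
Your argument is correct, and for the metric and the Poisson-system assertions it coincides with the paper's: both read $\eta$ off from $C_{1ij}$, and both reduce \eqref{d_{33}_8}--\eqref{d_{33}_9} to the Darboux--Halphen system \eqref{d100000} via the chain rule and the derivative of $s$ (your identity $\dot s/s=2(\mathscr N_3-\mathscr N_2)$ is exactly the paper's \eqref{d_{33}19}, rewritten).

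For the canonical coordinates the two routes differ in packaging. The paper computes all of the $C_{ij}^k$, then all of the $C_k^{ij}$, assembles the intersection form $g^{ij}=E^lC_l^{ij}$, and invokes Dubrovin's Proposition~3.3 to identify the $u_i$ as the roots of $\det(g^{ij}-u\,\eta^{ij})$; expanding this $3\times3$ determinant gives the cubic \eqref{d_{33}18}, which it then matches to \eqref{eqr1002} after the substitution $u=t_1+\tfrac12 t_2^2 w$. You instead write down directly the $3\times3$ matrix of multiplication by $E=t_1\mathfrak e_1+\tfrac12 t_2\mathfrak e_2$ from the table \eqref{d1800} (only $C_{22}^k$ and $C_{23}^k$ are needed) and take its characteristic polynomial. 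The two computations are equivalent, since $g^{ij}=\eta^{ik}\mathcal U^{j}{}_k$ makes $\det(g-u\eta)=\det(\eta)\det(\mathcal U-uI)$; but your path is shorter and avoids tabulating the full $C_k^{ij}$ and $g^{ij}$. The trade-off is that the paper's version makes the role of the intersection form explicit (useful if one wants the metric in canonical coordinates or the discriminant locus), whereas yours gets to \eqref{eqr1002} with less bookkeeping.
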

\begin{proof}
By definition (cf. \cite[lect.~1]{Dubrovin}) we have
$$\eta_{ij}=\partial_{t_1}\partial_{t_i}\partial_{t_j}F.$$
Hence we find
\begin{equation}
\label{d_{33}10}
\begin{split}
&\eta_{12}=\eta_{21}=0,\\
&\eta_{13}=\eta_{31}=1,\\
&\eta_{23}=\eta_{32}=0,\\
&\eta_{11}=0,\;\eta_{33}=0,\;\eta_{22}=1,
\end{split}
\end{equation}
and this gives immediately the metric $\eta=dt_2^2+2dt_1dt_3$. Also from \cite[lect.~1]{Dubrovin} the structure constants of the deformed Frobenius algebra associated to the solution $F$ of the WDVV equation is given by
\begin{equation}
\label{d_{33}11}
C_{ij}^k:=\eta^{kl}C_{ijl}
\end{equation}
where $C_{ijk}=\partial_{t_i}\partial_{t_j}\partial_{t_k}F$, $\eta^{ij}=(\eta_{ij})^{-1}$ and where we adopt Einstein's summation convention. Hence for the $C_{ij}^k$ we obtain
\begin{equation}
\label{d_{33}12}
\begin{split}
&C_{11}^1=1,\;\;C_{11}^2=0,\;\;C_{11}^3=0,\\
&C_{22}^1=-(3/4)t_2^2\gamma^{\prime}(t_3),\;\;C_{22}^2=-(3/2)t_2\gamma(t_3),\;\;C_{22}^3=1,\\
&C_{12}^1=0,\;\;C_{12}^2=1,\;\;C_{12}^3=0,\\
&C_{33}^1=-\frac{t_2^4}{16}\gamma^{\prime\prime\prime}(t_3),\;\;C_{33}^2=-\frac{1}{4}\gamma^{\prime\prime}(t_3)t_2^3,\;\;C_{11}^3=0,\\
&C_{13}^1=0,\;\;C_{13}^2=0,\;\;C_{13}^3=1,\\
&C_{21}^1=0,\;\;C_{21}^2=1,\;\;C_{21}^3=0,\\
&C_{23}^1=-\frac{1}{4}\gamma^{\prime\prime}(t_3)t_2^3,\;\;C_{23}^2=-\frac{3}{4}t_2^2\gamma^\prime(t_3),\;\;C_{23}^3=0,\\
&C_{31}^1=0,\;\;C_{31}^2=0,\;\;C_{31}^3=1,\\
&C_{32}^1=-\frac{1}{4}\gamma^{\prime\prime}(t_3)t_2^3,\;\;C_{32}^2=-\frac{3}{4}t_2^2\gamma^\prime(t_3),\;\;C_{32}^3=0.
\end{split}
\end{equation}
In order to go further we need to compute the intersection form given by \cite[lect.~3]{Dubrovin} (Einstein's summation convention assumed in the following whenever there are repeated indices)
\begin{equation}
\label{d_{33}13}
g^{ij}(t):=E^l(t)C_l^{ij}(t)
\end{equation}
where
\begin{equation}
\label{d_{33}14}
C_k^{ij}=\eta^{il}C_{lk}^j
\end{equation}
and $E=(E^1,E^2,E^3)=(t_1,\frac{1}{2}t_2,0)$ is the Euler vector field written in components.
The $C_k^{ij}$ are given by the following
\begin{equation}
\label{d_{33}15}
\begin{split}
&C_1^{11}=0,\,\,C_1^{12}=0,\,\,C_1^{13}=1,\,\,C_1^{21}=0,\,\,C_1^{22}=1,\,\,C_1^{23}=0,\,\,C_1^{31}=1,\,\,C_1^{32}=0,\,\,C_1^{33}=0\\
&C_2^{11}=-\frac{1}{4}\gamma^{\prime\prime}(t_3)t_2^3,\,\,C_2^{12}=-\frac{3}{4}\gamma^{\prime}(t_3)t_2^2,\,\,C_2^{13}=0,\,\,C_2^{21}=-\frac{3}{4}\gamma^{\prime}(t_3)t_2^2,\,\,\\
&C_2^{22}=-\frac{3}{2}\gamma(t_3)t_2,\,\,C_2^{23}=1,\,\,C_2^{31}=0,\,\,C_2^{32}=1,\,\,C_2^{33}=0\\
&C_3^{11}=-\frac{t_2^4}{16}\gamma^{\prime\prime\prime}(t_3),\,\,C_3^{12}=-\frac{1}{4}\gamma^{\prime\prime}(t_3)t_2^3,\,\,C_3^{13}=0,\,\,C_3^{21}=-\frac{1}{4}\gamma^{\prime\prime}(t_3)t_2^3,\\
&C_3^{22}=-\frac{3}{4}\gamma^\prime(t_3)t_2^2,\,C_3^{23}=0,\,\,C_3^{31}=0,\,\,C_3^{32}=0,\,\,C_3^{33}=1.
\end{split}
\end{equation}
Using this we get for the intersection form
\begin{equation}
\label{d_{33}16}
\begin{split}
&g^{11}=-\frac{1}{8}\gamma^{\prime\prime}(t_3)t_2^4,\;\;g^{12}=-\frac{3}{8}t_2^3\gamma^{\prime}(t_3),\;\;g^{13}=t_1,\\
&g^{21}=-\frac{3}{8}\gamma^{\prime}(t_3)t_2^3,\;\;g^{22}=t_1-\frac{3}{4}t_2^2\gamma(t_3),\;\;g^{23}=\frac{1}{2}t_2,\\
&g^{31}=t_1,\;\;g^{32}=\frac{1}{2}t_2,\;\;g^{33}=0.
\end{split}
\end{equation}
According to \cite[prop.~3.3]{Dubrovin} if $(t_1,t_2,t_3)\in\C^2\times\mathfrak H$ is a point such that the roots of the following degree $3$ polynomial in the variable $u$ are distinct then its three roots $u_1(t_1,t_2,t_3)$,  $u_2(t_1,t_2,t_3)$, $u_3(t_1,t_2,t_3)$ constitute the canonical coordinates in a neighborhood of $(t_1,t_2,t_3)$. The polynomial in question is
\begin{equation}
\label{d_{33}17}
\det(g^{ij}(t_1,t_2,t_3)-u\eta^{ij})
\end{equation}
where 
$$\eta^{ij}=\left(\begin{array}{ccc}0 & 0 & 1 \\0 & 1 & 0 \\1 & 0 & 0\end{array}\right).$$
Using \eqref{d_{33}16} we find after developing the determinant that
\begin{equation}
\label{d_{33}18}
\begin{split}
\det(g^{ij}(t_1,t_2,t_3)-u\eta^{ij})&=u^3+(-3t_1+\frac{3}{4}\gamma(t_3)t_2^2)u^2+(3t_1^2-\frac{3}{2}t_1t_2^2\gamma(t_3)+\frac{3}{8}t_2^4\gamma^\prime(t_3))u\\
&-t_1^3+\frac{3}{4}t_1^2t_2^2\gamma(t_3)-\frac{3}{8}t_2^4t_1\gamma^\prime(t_3)+\frac{1}{32}t_2^6\gamma^{\prime\prime}(t_3).
\end{split}
\end{equation}
One easily verifies that its three roots are given by
\begin{equation*}
\begin{split}
u_1&=t_1+\frac{1}{2}t_2^2\mathscr N_1(t_3),\\
u_2&=t_1+\frac{1}{2}t_2^2\mathscr N_2(t_3),\\
u_3&=t_1+\frac{1}{2}t_2^2\mathscr N_3(t_3),\\
&\text{$(t_1,t_2,t_3)\in\C\times\C\times\mathfrak H$},
\end{split}
\end{equation*}
where $\mathscr N_1(t_3)$, $\mathscr N_2(t_3)$, $\mathscr N_3(t_3)$ are the three roots of the cubic
\eqref{eqr1002}. When $(t_1,t_2,t_3)\in\C\times \C^{\times}\times \mathfrak H$ then $u_1(t_1,t_2,t_3)$, $u_2(t_1, t_2,t_3)$ and $u_3(t_1,t_2,t_3)$ are distinct, hence they are the canonical coordinates of the underlying Frobenius manifold in that neighborhood.

One easily finds the Poisson Hamiltonian equations for the standard Poisson structure and the given Hamiltonian $H$. To see that $\Omega_1$, $\Omega_2$, $\Omega_3$ given in \eqref{d_{33}_9} satisfy \eqref{d_{33}_8} one uses \eqref{d100000}, the chain rule and the identity
\begin{equation}
\label{d_{33}19}
2\dfrac{dt_3}{ds}\times\dfrac{(\mathscr N_1-\mathscr N_3)(\mathscr N_2-\mathscr N_3)}{(\mathscr N_2-\mathscr N_1)}=1
\end{equation}
which follows from
$$s=\frac{\mathscr N_3(t_3)-\mathscr N_1(t_3)}{\mathscr N_2(t_3)-\mathscr N_1(t_3)}$$
and \eqref{eqr1002}.
\end{proof}
\begin{proposition}[Change of basis vector fields]
\label{chang}
For $(t_1,t_2,t_3)\in\C\times\C^\times\times\mathfrak H$ and if $u_1(t_1,t_2,t_3)$, $u_2(t_1,t_2,t_3)$, $u_3(t_1,t_2,t_3)$ constitute the corresponding canonical coordinates we have
\begin{equation}
\label{d48}
\left(\begin{array}{c}\frac{\partial}{\partial u_1} \\\\\frac{\partial}{\partial u_2} \\\\\frac{\partial}{\partial u_3}\end{array}\right)=\dfrac{1}{\mathcal A}\left(\begin{array}{ccc}\mathcal X & t_2^2\mathscr{N}_1(\mathscr{N}_3-\mathscr{N}_2) & t_2(\mathscr{N}_3-\mathscr{N}_2) \\\\\mathcal Y &t_2^2\mathscr{N}_2(\mathscr{N}_1-\mathscr{N}_3) & t_2(\mathscr{N}_1-\mathscr{N}_3)\\\\\mathcal Z& t_2^2\mathscr{N}_3(\mathscr{N}_2-\mathscr{N}_1) & t_2(\mathscr{N}_2-\mathscr{N}_1)\end{array}\right)\left(\begin{array}{c}\frac{\partial}{\partial t_1} \\\\\frac{\partial}{\partial t_2} \\\\\frac{\partial}{\partial t_3}\end{array}\right)\end{equation}
where 
\begin{equation}
\label{d49}
\begin{split}
&\mathcal A=t_2^3(\mathscr N_2-\mathscr N_1)(\mathscr N_1-\mathscr N_3)(\mathscr N_2-\mathscr N_3),\\
&\mathcal X=\frac{t_2^3}{2}(\mathscr{N}_2^2(\mathscr N_1-\mathscr N_3)-\mathscr{N}_3^2(\mathscr N_1-\mathscr N_2)),\\
&\mathcal Y=\frac{t_2^3}{2}(\mathscr{N}_1^2(\mathscr N_3-\mathscr N_2)-\mathscr{N}_3^2(\mathscr N_1-\mathscr N_2)),\\
&\mathcal Z=\frac{t_2^3}{2}(\mathscr{N}_1^2(\mathscr N_3-\mathscr N_2)-\mathscr{N}_2^2(\mathscr N_3-\mathscr N_2)).
\end{split}
\end{equation}
\end{proposition}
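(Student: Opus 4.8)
The plan is to invert the linear relation already established in Theorem~\ref{canonicalcoor}. From \eqref{d_{33}7} we have the explicit formulas $u_i=t_1+\tfrac12 t_2^2\mathscr N_i(t_3)$ for $i=1,2,3$, where the $\mathscr N_i$ are the roots of \eqref{eqr1002}, hence functions of $t_3$ alone. First I would compute the Jacobian matrix $\partial u_i/\partial t_j$ directly from these formulas: the first column is identically $1$, the second column is $t_2\mathscr N_i(t_3)$, and the third column is $\tfrac12 t_2^2\mathscr N_i'(t_3)$. Here the key input is the corollary following \lemref{gendarboix}, which gives $\mathscr N_i'=-\mathscr N_i(\mathscr N_j+\mathscr N_k)+\mathscr N_j\mathscr N_k$ (indices a cyclic permutation of $1,2,3$) via the Darboux--Halphen system \eqref{d100000}; substituting this makes the third column polynomial in the $\mathscr N_i$ rather than involving derivatives.

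Next I would invert this $3\times 3$ matrix by the cofactor formula. Its determinant is, up to the factor $\tfrac12 t_2^3$, the Vandermonde-type determinant of the rows $(1,\mathscr N_i,\mathscr N_i')$; after the Darboux--Halphen substitution one checks by a short factorization that this equals (a constant times) $t_2^3(\mathscr N_1-\mathscr N_2)(\mathscr N_2-\mathscr N_3)(\mathscr N_1-\mathscr N_3)$, which is exactly the scalar prefactor appearing in \eqref{d48}; it is nonzero on $\C\times\C^\times\times\mathfrak H$ precisely because $t_2\neq 0$ and, as noted in the proof of the corollary, $\mathscr N_1,\mathscr N_2,\mathscr N_3$ are pairwise distinct there (algebraic independence of $\gamma,\gamma',\gamma''$ for $\gamma=\tfrac{\pi i}{3}E_2$). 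The adjugate entries are then $2\times 2$ minors of the columns $(t_2\mathscr N_i, \tfrac12 t_2^2\mathscr N_i')$ and so on; the entries in the last two columns of the matrix in \eqref{d48}, namely $t_2^2\mathscr N_i(\mathscr N_k-\mathscr N_j)$ and $t_2(\mathscr N_k-\mathscr N_j)$, drop out immediately, while the first-column entries $\mathcal X,\mathcal Y,\mathcal Z$ come from the minors involving the $\mathscr N_i'$, and after inserting the Darboux--Halphen expressions and simplifying one lands on \eqref{d49}. Finally, $\partial/\partial u_i=\sum_j(\partial t_j/\partial u_i)\,\partial/\partial t_j$ assembles these into the stated matrix identity.

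The computation is entirely mechanical once the Darboux--Halphen substitution is made, so the only real obstacle is bookkeeping: keeping the cyclic index conventions consistent so that the signs and the particular combinations $\mathscr N_a^2(\mathscr N_b-\mathscr N_c)$ in \eqref{d49} come out exactly as written, and verifying the determinant factorization. I would carry out the determinant and one representative cofactor (say the $(1,1)$ entry giving $\mathcal X$) in detail and remark that the other two follow by the same cyclic permutation of the indices; the remaining entries being manifestly correct from the shape of the second and third columns of the Jacobian.
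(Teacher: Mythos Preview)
Your proposal is correct and follows essentially the same route as the paper: compute the Jacobian $\partial u_i/\partial t_j$ from the explicit formulas \eqref{d_{33}7} via the chain rule, then invert it to obtain \eqref{d48}. The paper's proof is terser (it simply writes out the chain-rule relations \eqref{d50} and cites \eqref{eqr1002}), whereas you spell out the cofactor inversion and the Darboux--Halphen substitution \eqref{d100000} for $\mathscr N_i'$ more explicitly, but the underlying computation is identical.
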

\begin{proof}
This follows straightforwardly from the chain rule 
\begin{equation}
\label{d50}
\begin{split}
&\partial_{t_1}=\displaystyle\sum_{i=1}^3\frac{\partial u_i}{\partial t_1}\partial_{u_i}=\partial_{u_1}+\partial_{u_2}+\partial_{u_3},\\
&\partial_{t_2}=\displaystyle\sum_{i=1}^3\frac{\partial u_i}{\partial t_2}\partial_{u_i}=t_2(\partial_{u_1}+\partial_{u_2}+\partial_{u_3}),\\
&\partial_{t_3}=\displaystyle\sum_{i=1}^3\frac{\partial u_i}{\partial t_3}\partial_{u_i}=\frac{t_2^2}{2}(\dot{\mathscr N_1}\partial_{u_1}+\dot{\mathscr N_2}\partial_{u_2}+\dot{\mathscr N_3}\partial_{u_3}),
\end{split}
\end{equation}
and \eqref{eqr1002}. Here we have set $\partial_{t_i}=\frac{\partial}{\partial t_i}$ for $i=1,2,3$, $\partial_{u_i}=\frac{\partial}{\partial u_i}$ for $i=1,2,3$, $^{.}=\frac{\partial}{\partial t_3}$ and we have written $\mathscr N_i$ for $i=1,2,3$, to mean $\mathscr N_i(t_3)$.
\end{proof}
Let us show another way to derive \eqref{d_{33}_8} by exhibiting its link with Schwarzian equations
\begin{proposition}
\label{newprop}
Let $\mathscr N_1(t_3)$, $\mathscr N_2(t_3)$, $\mathscr N_3(t_3)$, $t_3\in\mathscr H$ be three distinct solutions of the Darboux-Halphen system
  \begin{equation}
\label{dhna}
\begin{split}
 \frac{d}{dt_3}\mathscr{N}_1&= -\mathscr{N}_1( \mathscr{N}_2+ \mathscr{N}_3) + \mathscr{N}_2\mathscr{N}_3\\
\frac{d}{dt_3}\mathscr{N}_2&= -\mathscr{N}_2( \mathscr{N}_1+ \mathscr{N}_3) + \mathscr{N}_1\mathscr{N}_3\\
 \frac{d}{dt_3}\mathscr{N}_3&= -\mathscr{N}_3( \mathscr{N}_1+ \mathscr{N}_2) + \mathscr{N}_1\mathscr{N}_2.
\end{split}
\end{equation}
Define 
$$s=\dfrac{\mathscr{N}_1-\mathscr{N}_3}{\mathscr{N}_1-\mathscr{N}_2}.$$
Then $s$ is a solution of Schwarz's equation
\begin{equation}
\label{schwarziander}
\dfrac{s^{\prime\prime\prime}}{s^\prime}-\dfrac{3}{2}\left(\dfrac{s^{\prime\prime}}{s^\prime}\right)^2=-\dfrac{1}{2}\left(\dfrac{1}{s^2}+\dfrac{1}{(s-1)^2}-\dfrac{1}{s(s-1)}\right),\quad^\prime=\dfrac{d}{dt_3}.
\end{equation}
Conversely any solution of \eqref{schwarziander} provides a solution for the Darboux-Halphen system \eqref{dhna}. In fact any solution to \eqref{dhna}, up to $SL(2,\C)$ action, is given by the following triplet
\begin{equation}
\label{dhnaa}
\begin{split}
\mathscr N_1&=-\dfrac{1}{2}\dfrac{d}{dt_3}\log\left(\dfrac{s^\prime}{s}\right)=:-\dfrac{1}{2}\dfrac{d}{dt_3}\log v_1\\
\mathscr N_2&=-\dfrac{1}{2}\dfrac{d}{dt_3}\log\left(\dfrac{s^\prime}{s -1}\right)=:-\dfrac{1}{2}\dfrac{d}{dt_3}\log v_2\\
\mathscr N_3&=-\dfrac{1}{2}\dfrac{d}{dt_3}\log\left(\dfrac{s^\prime}{s(s-1)}\right)=:-\dfrac{1}{2}\dfrac{d}{dt_3}\log v_3.
\end{split}
\end{equation}
We introduce the ansatz $(\Delta_1,\Delta_2,\Delta_3)$ via the following differential system
\begin{equation}
\label{ansatz}
\begin{split}
\dfrac{d}{dt_3}\Delta_1&=\Delta_2\Delta_3-\Delta_1(\mathscr N_2+\mathscr N_3)\\
\dfrac{d}{dt_3}\Delta_2&=\Delta_3\Delta_1-\Delta_2(\mathscr N_3+\mathscr N_1)\\
\dfrac{d}{dt_3}\Delta_3&=\Delta_2\Delta_1-\Delta_3(\mathscr N_2+\mathscr N_1),
\end{split}
\end{equation}
and
\begin{equation}
\label{ansatz1}
\begin{split}
&w_1=\dfrac{\Delta_1}{\sqrt{v_2v_3}};\qquad w_2=\dfrac{\Delta_2}{\sqrt{v_3v_1}}\\
&w_3=\dfrac{\Delta_3}{\sqrt{v_1v_2}}.
\end{split}
\end{equation}
Then we find
\begin{equation}
\label{ansatz3}
\begin{split}
\dfrac{d w_1}{ds}&=\dfrac{w_2w_3}{s}\\
\dfrac{d w_2}{ds}&=\dfrac{w_3w_1}{1-s}\\
\dfrac{d w_3}{ds}&=\dfrac{w_2w_3}{s(s-1)}
\end{split}
\end{equation}
\end{proposition}
\begin{proof}
Equation \eqref{schwarziander} is a straightforward computation, see also \cite{brioschi}. Equation \eqref{dhnaa} follows from \cite{takhtajan}. The remaining identities are also readily verified.
\end{proof}
\section{Group determinants and autonomous systems}
\label{seckasner}
In this section we exhibit a link between some differential system first considered in \cite[eq.~6]{kasner}, see also \cite{kinyon}, in the context of finding solutions to the Einstein's equations, and the group-determinant of the cyclic group $\Z/3\Z$. Then we show that such an equation can be naturally put into a more general setting, and solve the resulting problem.

Let us fix $G$ a finite group of order $\sharp(G)=n\geqslant3$ and $x_g$ a set of independent variables indexed by the elements of the group $G$, and let us  define the group determinant or Frobenius determinant as
$$F(G)((x_g)_{g\in G})=\det(x_{gh^{-1}}).$$ 
For $G$ an Abelian group we have
\begin{theorem}[Dedekind, Frobenius, \cite{conrad1998}]
\label{dedekind}
Given a finite abelian group $G$ one has
$$F(G)((x_g)_{g\in G}):=\det(x_{gh^{-1}})=\displaystyle\prod_{\chi\in\widehat{G}}(\sum_{g\in G}\chi(g)x_g)$$ 
where $\widehat{G}$ is the character group of $G$, i.e. the group of homomorphisms from $G\to \C^\times$.
\end{theorem}
\begin{example}
\label{ex}
Let $G=\Z/n\Z$ then the group determinant reduces to the circulant determinant defined as
$$F(\Z/n\Z)(x_1,\ldots,x_n)=\displaystyle \left|\begin{array}{ccccc}x_1 & x_2 & x_3 & \ldots & x_{n} \\x_{n} & x_1& x_2 & \ldots & x_{n-1} \\\vdots & \vdots & \vdots & \vdots & \vdots \\x_2 & x_3 & x_4 & \ldots & x_1\end{array}\right|=\prod_{k=1}^n\left(\sum_{j=1}^n\zeta^{(k-1)(j-1)}x_j\right),$$
where $\zeta$ is a primitive $n$-th root of unity. In particular for $G=\Z/3\Z$ we obtain
$$F(\Z/3\Z)(x_1,x_2,x_3)=x_1^3+x_2^3+x_3^3-3x_1x_2x_3.$$
For the Klein group $G=\Z/2\Z\times\Z/2\Z=\{(0,0),(1,0),(0,1),(1,1)\}$ one readily sees that its character table is given by 
$$\begin{tabular}{|c|c|c|c|c|}\hline $\Z/2\Z\times\Z/2\Z$ & (0,0) & (1,0) & (0,1) & (1,1) \\\hline $\chi_{(0,0)}$ & 1 & 1 & 1 & 1 \\\hline $\chi_{(1,0)}$ & 1 & -1 & 1 & -1 \\\hline $\chi_{(0,1)}$ & 1& 1 & -1 & -1 \\\hline $\chi_{(1,1)}$ & 1 & -1 & -1 & 1 \\\hline \end{tabular}.$$
Thus we get
\begin{equation*}
\begin{split}
F(\Z/2\Z\times\Z/2\Z)(x_1,x_2,x_3,x_4)&=(x_1+x_2+x_3+x_4)(x_1-x_2+x_3-x_4)\\&(x_1+x_2-x_3-x_4)
(x_1-x_2-x_3+x_4)\end{split}
\end{equation*}
after identifying $(0,0)$ with $1$, $(1,0)$ with $2$, $(0,1)$ with $3$ and $(1,1)$ with $4$.
\end{example}
We now introduce the sought after Kasner's system \cite[eq.~6]{kasner}, see also \cite[\S3]{markus}. It is given by  
\begin{equation}
\label{kasner1}
\begin{split}
&x_1^\prime=x_2x_3-x_1^2\\
&x_2^\prime=x_3x_1-x_2^2\\
&x_3^\prime=x_1x_2-x_3^2,\quad^\prime=\dfrac{d}{dt}.
\end{split}
\end{equation}
This system describes solutions to Einstein's gravitation equations in a very special case. Its link with the group-determinant of $\Z/3\Z$ is given by the following
\begin{equation}
\label{kasner2}
\begin{split}
&x_1^\prime=-\dfrac{1}{3}\dfrac{\partial F(\Z/3\Z)}{\partial x_1}\\
&x_2^\prime=-\dfrac{1}{3}\dfrac{\partial F(\Z/3\Z)}{\partial x_2}\\
&x_3^\prime=-\dfrac{1}{3}\dfrac{\partial F(\Z/3\Z)}{\partial x_3}.
\end{split}
\end{equation}
Below we wish to study the system analogous to \eqref{kasner2} given by
\begin{equation}
\label{kasner3}
\dfrac{dx_k}{dt}=-\dfrac{1}{n}\dfrac{\partial F(\Z/n\Z)}{\partial x_k},\quad n\geqslant3,\;k=1,2,\ldots,n,
\end{equation}
together with a similar system derived from $F(\Z/2\Z\times\Z/2\Z)$. Before continuing we introduce, following \cite{kinyon}, the
\begin{definition}
\label{defkinyon}
Let $x^\prime=F(x),\;^\prime=\dfrac{d}{dt}$ be an autonomous system on an open, connected neighborhood of $0$ in $\C^n$ $n\geqslant3$, with $F$ analytic on $U$, and $F\not=0$. If another differential system $x^\prime=G(x)$ (with $G$ analytic) is given on $V\subset \C^m$, $m\geqslant1$, and there is a nonempty open subset $\widetilde{U}\subset U$, together with an analytic map $\varphi:\widetilde{U}\to V$ that sends parametrized solutions of $x^\prime=F(x)$ to parametrized solutions of $x^\prime=G(x)$, then we call $\varphi$ a solution preserving map from $x^\prime=F(x)$ to $x^\prime=G(x)$. If $V=U$, $G=F$, and $\varphi$ is (locally) invertible then $\varphi$ is called a symmetry of $x^\prime=F(x)$. If $V=U$, $G=\mu F$ (with $0\not=\mu:U\to\C$ analytic), and $\varphi$ is (locally) invertible then $\varphi$ is called an orbital symmetry of $x^\prime=F(x)$.
\end{definition}
With this definition in hand we have the
\begin{theorem}
\label{kasnerth1}
The autonomous system \eqref{kasner3} can be transformed into the system
\begin{equation}
\label{kasner4}
\begin{split}
&\dfrac{dy_1}{dt}=-y_2y_3\ldots y_n\\
&\dfrac{dy_k}{dt}=-\prod_{j=1,\;j\not= n+2-k}^ny_j,\quad k=2,3,\ldots,n,
\end{split}
\end{equation}
via the solution preserving map
\begin{equation}
\label{kasner5}
y_k:=\sum_{j=1}^n\zeta^{(k-1)(j-1)}x_j,\quad k=1,2,\ldots,n,
\end{equation}
with inverse given by
\begin{equation}
\label{kasner6}
x_k:=\dfrac{1}{n}\sum_{j=1}^n\zeta^{-(k-1)(j-1)}y_j,\quad k=1,2,\ldots,n.
\end{equation}
The system \eqref{kasner4} admits the quadratic first integrals (constant along the solutions) 
\begin{equation}
\label{kasner16000}
y_1^2-y_2y_n,y_1^2-y_3y_{n-1},\ldots,y_1^2-y_{\lfloor \frac{n+2}{2} \rfloor}y_{n+1-\lfloor \frac{n}{2} \rfloor}.
\end{equation}
Furthermore $y_2$ and $y_n$ are always Darboux polynomials for \eqref{kasner4}. Finally one can solve the system \eqref{kasner4} explicitly.
\end{theorem}
\begin{proof}
Using \eqref{kasner5} we obtain
\begin{equation}
\label{kasner7}
\begin{split}
\dfrac{dy_k}{dt}&=\sum_{j=1}^n\zeta^{(k-1)(j-1)}\dfrac{d x_j}{dt}=-\dfrac{1}{n}\sum_{j=1}^n\zeta^{(k-1)(j-1)}\dfrac{\partial F(\Z/n\Z)}{\partial x_j}\\
&=-\dfrac{1}{n}\sum_{j=1}^n\zeta^{(k-1)(j-1)}\sum_{l=1}^n\dfrac{\partial F(\Z/n\Z)}{\partial y_l}\dfrac{\partial y_l}{\partial x_j}\\
&=-\dfrac{1}{n}\sum_{j,l=1}^n\zeta^{(k-1)(j-1)}\dfrac{\partial F(\Z/n\Z)}{\partial x_l}\zeta^{(l-1)(j-1)}.
\end{split}
\end{equation}
Thus
\begin{equation}
\label{kasner8}
\dfrac{dy_k}{dt}=-\dfrac{1}{n}\sum_{l=1}^n\sum_{j=1}^n\zeta^{(k+l-2)(j-1)}\dfrac{\partial F(\Z/n\Z)}{\partial x_l}.
\end{equation}
Now since $\zeta$ is a primitive $n$-th root of unity we have
\begin{equation*}
\begin{split}
\sum_{j=1}^n\zeta^{a(j-1)}=0
\end{split}
\end{equation*}
if $a$ is not divisible by $n$ and 
\begin{equation*}
\begin{split}
\sum_{j=1}^n\zeta^{a(j-1)}=n
\end{split}
\end{equation*}
otherwise. From this there results
\begin{equation}
\label{kasner9}
\begin{split}
&\dfrac{dy_1}{dt}=-\dfrac{\partial F(\Z/n\Z)}{\partial y_1}\\
&\dfrac{dy_k}{dt}=-\dfrac{\partial F(\Z/n\Z)}{\partial y_{n+2-k}},\quad k\geqslant2.
\end{split}
\end{equation}
From \exref{ex} it follows that $F(\Z/n\Z)(x_1,x_2,\ldots,x_n)=y_1y_2\ldots y_n$. Hence by \eqref{kasner9} we find the system \eqref{kasner4}.

 Let us now solve the system \eqref{kasner4}. There are three cases. The first case happens when two or more of the $y_k$, $1\leqslant k\leqslant n$ are zero. Then one readily sees from the system \eqref{kasner4} that all its solutions are constant. In the second case exactly one of the $y_k$, $1\leqslant k\leqslant n$ is zero. Then we observe from the system \eqref{kasner4} that $y_1$ is not zero. Thus we must have $y_m=0$ for some fixed $m>1$. From \eqref{kasner4}, every $y_k$ is constant, except $y_{n+2-m}$. Also since only $y_m=0$, it is clear that $n+2\not=2m$. Thus we have with fixed $m$, $m\not=1$, and $m\not=\dfrac{1}{2}(n+2)$
 \begin{equation*}
 \begin{split}
 &y_k=c_k\qquad(k=1,2,\ldots,n,\quad k\not=n+2-m\not=m,\;c_m=0)\\
 &\dfrac{dy_{n+2-m}}{dt}=ay_{n+2-m},\quad a=\prod_{j\not=m,n+2-m}c_j\not=0.
 \end{split}
 \end{equation*}
 This readily gives
 $$y_{n+2-m}=c_{n+2-m}e^{at},\quad c_{n+2-m}\not=0.$$
 Let us now examine the remaining case where no $y_k$ vanishes identically. From \eqref{kasner4} we deduce
 \begin{equation}
 \label{kasner10}
 \begin{split}
 y_{n+2-k}dy_k=y_1dy_1\quad(k=2,\ldots,n)
 \end{split}
 \end{equation}
 and from this in turn
 $$y_{n+2-k}dy_k-y_kdy_{n+2-k}=0,\quad k=2,3,\ldots,m:=\Big\lfloor \dfrac{n+1}{2} \Big\rfloor.$$
 Integration yields
 \begin{equation}
 \label{kasner11}
 y_{n+2-k}=a_k^{-1}y_k,\quad(k=2,3,\ldots,m),\;a_k\in\C^\times.
 \end{equation}
 Inserting \eqref{kasner11} into \eqref{kasner10} we find
 $$y_kdy_k=a_ky_1dy_1,\quad(k=2,3,\ldots,m).$$
 This gives
 \begin{equation}
\label{kasner12}
y_k^2=a_k(y_1^2+b_k), \quad(k=2,3,\ldots,m).
\end{equation}
Here the $b_k$ are complex constants. In the case $n$ odd, \eqref{kasner11} and \eqref{kasner12} are sufficient for the satisfaction of \eqref{kasner10}. For $n$ even we necessarily have $n=2m$, with the above choice of $m$. In this case we also have from \eqref{kasner10} with $k=m+1$,
$$y_{m+1}dy_{m+1}=y_1dy_1$$
from which we deduce
\begin{equation}
\label{kasner13}
y_{m+1}^2=y_1^2+a, \quad a\in\C.
\end{equation}
The previous reasonings allow us to express $y_2$, $y_3$, $\ldots y_n$ in terms of $y_1$ alone. Therefore we may reduce the first of equations \eqref{kasner4} to an equation involving $y_1$ only. Indeed combining \eqref{kasner11} and \eqref{kasner12} yields
$$y_ky_{n+2-k}=y_1^2+b_k,\quad k=2,3,\ldots,m.$$
Forming the product of these expressions and using \eqref{kasner4} we find
\begin{equation}
\label{kasner14}
\begin{split}
\dfrac{dy_1}{dt}=-y_{m+1}\prod_{k=2}^m(y_1^2+b_k)\\
y_{m+1}^2=y_1^2+a,
\end{split}
\end{equation}
where the factor $y_{m+1}$ must be omitted for the case $n$ odd. Let us choose the numbers $b_2$, $b_3$, $\ldots b_m$ as distinct then we may write
\begin{equation}
\label{kasner15}
\dfrac{1}{\prod_{k=2}^m(y_1^2+b_k)}=\sum_{k=2}^m\dfrac{A_k}{y_1^2+b_k},\qquad A_k^{-1}=\prod_{j=2,\,j\not=k}^m(b_j-b_k).
\end{equation}
If $n$ is even and we use \eqref{kasner15} in \eqref{kasner14} there results
\begin{equation}
\label{kasner16}
\sum_{k=2}^m\dfrac{A_kdy_1}{(y_1^2+b_k)y_{m+1}}=-dt;
\end{equation}
while if $n$ is odd we obtain
\begin{equation}
\label{kasner17}
\sum_{k=2}^m\dfrac{A_kdy_1}{(y_1^2+b_k)}=-dt.
\end{equation}
The verification that the expressions \eqref{kasner16000} are constant along the solutions of \eqref{kasner4} is immediate. Finally one readily sees by considering the associated vector field to \eqref{kasner4}, that $y_2$ and $y_n$ are Darboux polynomials for \eqref{kasner4}.
\end{proof}
\begin{example}
\label{exkasner}
Let us look at an example of the \theoref{kasnerth1}. We treat the case $n=3$. Then \eqref{kasner3} is identical to \eqref{kasner1} given by
\begin{equation*}
\label{}
\begin{split}
&x_1^\prime=x_2x_3-x_1^2\\
&x_2^\prime=x_3x_1-x_2^2\\
&x_3^\prime=x_1x_2-x_3^2,\quad^\prime=\dfrac{d}{dt}.
\end{split}
\end{equation*}
 The associated system \eqref{kasner4} reads
 \begin{equation}
\label{kasner1500}
\begin{split}
&y_1^\prime=-y_2y_3\\
&y_2^\prime=-y_1y_2\\
&y_3^\prime=-y_1y_3,\quad^\prime=\dfrac{d}{dt}.
\end{split}
\end{equation}
Furthermore \eqref{kasner1500} possesses the quadratic first integral $y_1^2-y_2y_3$. We remark that \eqref{kasner1} or its associated vector field on $\C^3$: $(x_2x_3-x_1^2)\dfrac{\partial}{\partial x_1}+(x_1x_3-x_2^2)\dfrac{\partial}{\partial x_2}+(x_1x_2-x_3^3)\dfrac{\partial}{\partial x_3}$ admits the symmetric group $S_3$, as symmetry group (see \cite[ex.~2.3]{kinyon}), in the sense of \defiref{defkinyon}. Hence by \cite[\S2.4.1]{guillot}, $\xi(t):=x_1(t)+x_2(t)+x_3(t)$ should satisfy an equation of the Chazy type of the form
\begin{equation}
\label{kasner15000}
\xi^{\prime\prime\prime}=a\xi^4+b\xi^2\xi^\prime+c(\xi^\prime)^2+d\xi,\quad^\prime=\dfrac{d}{dt},\;a,b,c,d\in\C.
\end{equation}
But in the case of \eqref{kasner1} this is easily deduced. Indeed introducing the elementary symmetric functions on $x_1$, $x_2$, $x_3$ 
 \begin{equation}
\label{kasner1501}
\begin{split}
&\xi(x)=x_1+x_2+x_3\\
&\eta(x)=x_1x_2+x_1x_3+x_2x_3\\
&\zeta(x)=x_1x_2x_3,
\end{split}
\end{equation}
we have by \cite[eq.~7]{kasner} the following system
\begin{equation}
\label{kasner1502}
\begin{split}
&\xi^\prime=\xi^2+\eta\\
&\eta^\prime=0\\
&\zeta^\prime+3\xi\zeta=\eta^2,\quad^\prime=\dfrac{d}{dt}.
\end{split}
\end{equation}
If we derive the first equation of the system \eqref{kasner1502} twice we readily obtain an equation of the advertised form \eqref{kasner15000}.
\end{example}
\begin{theorem}
Let $G=\Z/2\Z\times\Z/2\Z$ be the Klein group and 
\begin{equation}
\label{kasner18}
\begin{split}
F(\Z/2\Z\times\Z/2\Z)(x_1,x_2,x_3,x_4)&=(x_1+x_2+x_3+x_4)(x_1-x_2+x_3-x_4)\\&(x_1+x_2-x_3-x_4)
(x_1-x_2-x_3+x_4).
\end{split}
\end{equation}
Then the differential system 
\begin{equation}
\label{kasner19}
\dfrac{d x_k}{dt}=\dfrac{\partial F(\Z/2\Z\times\Z/2\Z) }{\partial x_k},\quad 1\leqslant k\leqslant4
\end{equation}
admits the symmetric group $S_4$ (the group of linear transformations defined by permutations of the standard basis) as a group of symmetry in the sense of \defiref{defkinyon}. Furthermore the mapping
\begin{equation}
\label{kasner20}
x:=(x_1,x_2,x_3,x_4)\mapsto\left(\begin{array}{c}\varphi_1(x) \\\varphi_2(x) \\\varphi_3(x) \\\varphi_4(x)\end{array}\right),
\end{equation}
where
\begin{equation}
\label{kasner21}
\begin{split}
\varphi_1(x)=x_1+x_2+x_3+x_4\\
\varphi_2(x)=x_1x_2+x_1x_3+x_1x_4+x_2x_3+x_2x_4+x_3x_4\\
\varphi_3(x)=x_1x_2x_3+x_1x_2x_4+x_1x_3x_4+x_2x_3x_4\\
\varphi_4(x)=x_1x_2x_3x_4,
\end{split}
\end{equation}
are the elementary symmetric polynomials formed on the variables $x_1$, $x_2$, $x_3$, $x_4$, is a solution preserving from \eqref{kasner19} to another autonomous polynomial differential system $y^\prime=G(y),\,^\prime=\frac{d}{dt}$, $y\in\C^4$.
\end{theorem}
\begin{proof}
According to \cite[prop.~1.1]{kinyon}, $S_4$ (considered as embedded in $\rm{GL}(4,\C)$ via its permutation representation), is a symmetry group of \eqref{kasner19} if and only if for any $T\in S_4$ we have 
\begingroup
\setlength{\abovedisplayskip}{0pt}
\setlength{\belowdisplayskip}{2pt}
\setlength{\belowdisplayshortskip}{-2pt}
\begin{equation}
\label{kasner22}
T\circ\left(\begin{array}{c}\dfrac{\partial F(\Z/2\Z\times\Z/2\Z) }{\partial x_1} \\\dfrac{\partial F(\Z/2\Z\times\Z/2\Z) }{\partial x_2} \\\dfrac{\partial F(\Z/2\Z\times\Z/2\Z) }{\partial x_3} \\\dfrac{\partial F(\Z/2\Z\times\Z/2\Z) }{\partial x_4}\end{array}\right)\circ T^{-1}=\left(\begin{array}{c}\dfrac{\partial F(\Z/2\Z\times\Z/2\Z) }{\partial x_1} \\\dfrac{\partial F(\Z/2\Z\times\Z/2\Z) }{\partial x_2} \\\dfrac{\partial F(\Z/2\Z\times\Z/2\Z) }{\partial x_3} \\\dfrac{\partial F(\Z/2\Z\times\Z/2\Z) }{\partial x_4}\end{array}\right).
\end{equation}
\endgroup
Since the symmetric group $S_4$ is generated by transpositions it suffices to verify such an identity for any transposition in order to indeed show that $S_4$ is a symmetry group of \eqref{kasner19}. Let $\tau_{ij},\,i<j$ denote the permutation exchanging $i$ and $j$. One verifies by a long but straightforward computation that
\begingroup 
\setlength{\abovedisplayskip}{0pt}
\setlength{\belowdisplayskip}{2pt}
\setlength{\belowdisplayshortskip}{-2pt}
\begin{equation}
\label{kasner23}
\begin{split}
\left(\begin{array}{c}\dfrac{\partial F(\Z/2\Z\times\Z/2\Z) }{\partial x_1} \\\dfrac{\partial F(\Z/2\Z\times\Z/2\Z) }{\partial x_2} \\\dfrac{\partial F(\Z/2\Z\times\Z/2\Z) }{\partial x_3} \\\dfrac{\partial F(\Z/2\Z\times\Z/2\Z) }{\partial x_4}\end{array}\right)\circ\tau_{ij}^{-1}=\left(\begin{array}{c}\dfrac{\partial F(\Z/2\Z\times\Z/2\Z) }{\partial x_{\tau_{ij}(1)}} \\\dfrac{\partial F(\Z/2\Z\times\Z/2\Z) }{\partial x_{\tau_{ij}(2)}} \\\dfrac{\partial F(\Z/2\Z\times\Z/2\Z) }{\partial x_{\tau_{ij}(3)}} \\\dfrac{\partial F(\Z/2\Z\times\Z/2\Z) }{\partial x_{\tau_{ij}(4)}}\end{array}\right)
\end{split}
\end{equation}
\endgroup
Post-composing the identities in \eqref{kasner23} with the corresponding transpositions, one readily sees that for any transposition $T$ of $S_4$ one has 
\begin{equation}
\label{kasner24}
T\circ \left(\begin{array}{c}\dfrac{\partial F(\Z/2\Z\times\Z/2\Z) }{\partial x_1} \\\dfrac{\partial F(\Z/2\Z\times\Z/2\Z) }{\partial x_2} \\\dfrac{\partial F(\Z/2\Z\times\Z/2\Z) }{\partial x_3} \\\dfrac{\partial F(\Z/2\Z\times\Z/2\Z) }{\partial x_4}\end{array}\right)\circ T^{-1}=\left(\begin{array}{c}\dfrac{\partial F(\Z/2\Z\times\Z/2\Z) }{\partial x_1} \\\dfrac{\partial F(\Z/2\Z\times\Z/2\Z) }{\partial x_2} \\\dfrac{\partial F(\Z/2\Z\times\Z/2\Z) }{\partial x_3} \\\dfrac{\partial F(\Z/2\Z\times\Z/2\Z) }{\partial x_4}\end{array}\right).
\end{equation} 
Hence one may infer that \eqref{kasner22} is true for any permutation of $S_4$ and that $S_4$ is a symmetry group of \eqref{kasner19}. 

In order to obtain the last part of the theorem we may use \cite[th.~2.1]{kinyon}. Indeed $S_4\subset \rm{GL}(4,\C)$ has a set of polynomial invariants finitely generated by the elementary symmetric polynomials $\varphi_1(x)$, $\varphi_2(x)$, $\varphi_3(x)$, $\varphi_4(x)$; furthermore as we just saw in the first part of this proof 
$$T\circ \left(\begin{array}{c}\dfrac{\partial F(\Z/2\Z\times\Z/2\Z) }{\partial x_1} \\\dfrac{\partial F(\Z/2\Z\times\Z/2\Z) }{\partial x_2} \\\dfrac{\partial F(\Z/2\Z\times\Z/2\Z) }{\partial x_3} \\\dfrac{\partial F(\Z/2\Z\times\Z/2\Z) }{\partial x_4}\end{array}\right)\circ T^{-1}=\left(\begin{array}{c}\dfrac{\partial F(\Z/2\Z\times\Z/2\Z) }{\partial x_1} \\\dfrac{\partial F(\Z/2\Z\times\Z/2\Z) }{\partial x_2} \\\dfrac{\partial F(\Z/2\Z\times\Z/2\Z) }{\partial x_3} \\\dfrac{\partial F(\Z/2\Z\times\Z/2\Z) }{\partial x_4}\end{array}\right),\quad \forall \,T\in S_4.$$
Then according to \cite[th.~2.1]{kinyon} the map
$$x:=(x_1,x_2,x_3,x_4)\mapsto\left(\begin{array}{c}\varphi_1(x) \\\varphi_2(x) \\\varphi_3(x) \\\varphi_4(x)\end{array}\right)$$
is solution preserving from \eqref{kasner19} to a polynomial differential system $y^\prime=G(y)$.
\end{proof}
\section{Wirtinger connections}
\label{wirtinger}
In this section we study various connections. We start by recalling some classical definitions on connections \cite{GusP},\,\cite{Tyurin}. 
\begin{enumerate}
\item For a non-vanishing function $f= f(t)$, we write
\begin{eqnarray*} 
\{f, t\}_{1}&:=&\frac{d}{dt} \log \frac{d f}{d t},\\
\{f, t\}_2 &:= &\frac{d^2}{dt^2} \log \frac{d f}{d t}- \frac{1}{2}
( \frac{d}{dt} \log \frac{d f}{d t})^2,
\end{eqnarray*}
$\{f, t\}_2$ is the Schwarzian derivative of  $f$.
\item If  $X$ is a Riemann surface with a system of coordinate charts
  $\{(U,z)\}$, a system of 
functions $\{(q, U, z)\}$ (one function for each coordinate chart) defines a $k$-form or a tensor of order $k$ if, under change of coordinates it transforms according to
$$ {\tilde q}(\tilde z)(\frac{d {\tilde z}}{d z})^k=  q(z).$$
It defines a 
projective connection on  $X$ if it transforms as:
$${\tilde q}(\tilde z)(\frac{d {\tilde z}}{d z})^2 = q(z) - 
\{ {\tilde z}, z\}_2 $$
and defines an affine connection if 
$${\tilde q}(\tilde z)\frac{d {\tilde z}}{d z}= q(z)- 
\{{\tilde z}, z\}_1.$$
\item If $r$ is an affine connection, the curvature $$q= \frac{\partial r}{\partial
  z}- \frac{1}{2}r^2$$ is a projective connection. We thus have  a way to obtain projective connections from 
affine connections.
 \item An affine connection $r$ defines a covariant derivative
$ \nabla = \nabla_{\alpha}$ mapping $\alpha$-forms into
$\alpha +1$-forms by
$$\nabla f =  \frac{\partial f}{\partial z} - \alpha r f. $$
\end{enumerate}
To give an example, let us consider the Eisenstein series $E_2$, which verifies (see \cite{zag2004, serre1970}) the functional equation
$$\displaystyle E_2(\frac{a\tau+ b}{c\tau+ d})(c\tau+ d)^{-2}= E_2(\tau)+ \frac{6}{i\pi} \frac{c}{c\tau+ d}.$$ 
This shows that $ \displaystyle \frac{\pi i}{3} E_2$ defines an affine connection on $\mathfrak H/SL(2,\Z)$ minus the $SL(2,\Z)$-orbits of the points $i\infty$, $i$ and $e^{2i\pi/3}$, see \cite[appen.~C]{Dubrovin}. From the third relation in \eqref{S1} we find that the curvature is $\frac{1}{18} \pi^2 E_4$, which is a projective connection on $\mathfrak H/SL(2,\Z)$ minus the $SL(2,\Z)$-orbits of the points $i\infty$, $i$ and $e^{2i\pi/3}$. The covariant differentiation defined by this affine connection corresponds to the fact that the Serre derivative $$\frac{1}{2i\pi}\frac{d}{d\tau} - \frac{2k}{12} E_2=\frac{1}{2i\pi}\frac{d}{d\tau} - \frac{k}{6} E_2$$ sends the space of modular forms of weight $2k$ for ${SL(2,\Z)}$ into the space of modular forms of weight $2k+ 2$.

Suppose now that $X$ is a compact Riemann surface of genus $g >0$. We fix a homology canonical basis $a_1,\cdots, a_g,~ b_1,\cdots, b_g$ and a basis $w_1,\ldots,w_g$ of holomorphic differential such that for $i,j= 1, \cdots, g$
$$\int_{a_j} w_j= \delta_i^j,\quad \int_{b_j}= \tau_{ij},$$
where $(\tau_{ij})_{1\leq i,j\leq g}$ is a Riemann matrix and $\delta_i^j$ is the Kronecker symbol.

In this situation, there is a unique symmetric bidifferential of the second kind $\omega$, called the Bergman Kernel of $X$, such that
\begin{enumerate}
 \item $\omega= \frac{1}{(u- v)^2 } du dv+ H(u, v) du dv$, where for $U\subset X$ an open set, $H(u,v)$ is holomorphic on $U\times U\subset X\times X$. Here $u$ designates a coordinate on the first direct factor $U$, and $v$ is a coordinate on the second direct factor U.
 \item $\rm{Bires}(\omega)=1$, zero residue, and no further singularities.
\item $\int_{a_i} \omega(u,v) =0$ for any fixed $u\in X$ and $i= 1,\cdots,g$,
\item $\int_{b_i} \omega(u,v)= w_i(u),\, i= 1,\cdots,g$.
\end{enumerate}
\noindent It is a general fact \cite[prop.~1.3.6]{Tyurin}, \cite[p.~20]{Fay}, that a symmetric bidifferential of the second kind $\omega$, given along the diagonal by $\displaystyle \omega=  \frac{1}{(u- v)^2} du dv+ H(u,v)du dv$, with a holomorphic $H(u, v)$ gives rise to a projective connection $h_{\omega}(Z)= 6 H(u,v)_{\vert u=v= Z}$.

 For $\epsilon, \,\delta \in \BR^g$ we define the genus $g\geqslant1$ theta function with characteristics by
$$\theta \begin{bmatrix}
  \epsilon\\
\delta
 \end{bmatrix}(z,\tau)= \sum_{n\in \Z^g} e^{i\pi(n+ \frac{\epsilon}{2})\tau(n+ \frac{\epsilon}{2})^t+ 2i\pi(n+ \frac{\epsilon}{2})(z+ \frac{\delta}{2})^t},
$$
where $z\in\C^g$ and $\tau$ is a symmetric $g\times g$ complex matrix, with positive definite imaginary part $\rm{Im}(\tau_{ij})$, a so-called Riemann matrix. We have 
$$\theta \begin{bmatrix}
  \epsilon\\
\delta
 \end{bmatrix}(z,\tau)=  
 (-1)^{\epsilon\delta^t} \theta \begin{bmatrix}
  \epsilon\\
\delta
 \end{bmatrix}(-z,\tau).$$
Every even $\theta$-function with characteristic 
$\displaystyle \begin{bmatrix}
  \epsilon\\
\delta
 \end{bmatrix}
$ on $X$ (for example the first $3$ cases in \eqref{ser43}) with non-zero $\theta$-constant $\theta \begin{bmatrix}
  \epsilon\\
\delta
 \end{bmatrix}
(0, \tau) \neq 0$, determines a bidifferential $\omega_{\begin{bmatrix}
  \epsilon\\
\delta
 \end{bmatrix}}
$, in the following way \cite[p.~20, 22]{Fay} or \cite{Tyurin}. We first observe that
$$\sum_{i,j= 1}^g w_i(u) w_j(v) \frac{\partial^2}{\partial z_i \partial z_j} \log\theta \begin{bmatrix}
  \epsilon\\
\delta
 \end{bmatrix}
(z, \tau)_{|z= 0}$$
is a holomorphic symmetric bidifferential on $X\times X$; and that
$$ \omega+ \sum_{i,j= 1}^g  \frac{\partial^2}{\partial z_i \partial z_j} \log\theta \begin{bmatrix}
  \epsilon\\
\delta
 \end{bmatrix}
(z, \tau)_{|z= 0} w_i(u) w_j(v)=:\omega_{\begin{bmatrix}
  \epsilon\\
\delta
 \end{bmatrix}}
$$
is invariant under a change of homology basis preserving the characteristic $\begin{bmatrix}
  \epsilon\\
\delta
 \end{bmatrix}
$
of the $\theta$-function. We have
\begin{definition}
The symmetric invariant bidifferential of the second kind $\omega_{\begin{bmatrix}
  \epsilon\\
\delta
 \end{bmatrix}}$ is called a Klein bidifferential. The associated projective connection is called the Wirtinger connection $W_{\begin{bmatrix}
  \epsilon\\
\delta
 \end{bmatrix}}$. Finally writing for short $\beta= \begin{bmatrix}
  \epsilon\\
\delta
 \end{bmatrix}$, the invariant Klein bidifferential $K_X(u, v)$ on the Riemann surface $X$ is defined by
$$K_X(u,v)= \omega(u, v)+ \frac{2}{4^g+ 2^g} \sum_{i,j= 1}^g \frac{\partial^2}{\partial z_i \partial z_j} \log \prod_{\beta,\, even} 
\theta [\beta](z,\tau)_{|z= 0} w_i(u) w_j(v).
$$
The associated projective connection $W_X$ is called the invariant Wirtinger connection. Hence $K_X$ is an averaged bidifferential over all the $2^{g-1}( 2^g +1)$ even characteristics. 
 \end{definition}
Let us now give the Wirtinger connections in the genus one case. We recall that classically
\begin{eqnarray}\label{Wei}
\zeta '(u)= -\wp(u),\quad \eta_1= \zeta (\omega_1),\quad
\eta_2= \zeta(\omega_2),\quad\tau= \frac{\omega_2}{\omega_1 },\quad\rm{Im}\, \tau>0.
 \end{eqnarray}
 As is well known, if we identify the opposite sides of the parallelogram generated by 
$2\omega_1, 2\omega_2, \frac{\omega_2}{\omega_1}\in\mathfrak H$ in the complex $u$-plane, we obtain a genus one Riemann surface, denoted hereafter $X$. The effect of a closed $a$-cycle is the parallel displacement by the vector $2\omega_1$ and the effect of a closed $b$-cycle is the displacement by the vector $2\omega_2$. With the notations of \eqref{Wei}, the normalized integral of the first kind and the Riemann matrix are given respectively by
$$ w= \frac{1}{2\omega_1}u, \,u= u(P); \,\,\;\; \tau= \int_0^{2\omega_2} dw.$$
Further \cite[p.~428]{jordan1896}
\begin{equation}
\begin{split}
&\zeta(u)=\dfrac{\eta_1}{\omega_1}u+\dfrac{1}{2\omega_1}\dfrac{d}{dw}\{\log\vartheta(w,\tau)\}\\
&\wp(u)=\left(\dfrac{1}{2\omega_1}\right)^2\left[-4\eta_1\omega_1-\dfrac{d^2}{dw^2}(\log\vartheta(w,\tau))\right]\\
&\wp(u+\omega_i)=\left(\dfrac{1}{2\omega_1}\right)^2\left[-4\omega_1\eta_1-\dfrac{d^2}{dw^2}(\log\vartheta_i(w,\tau))\right],\;i=1,2,3.
\end{split}
\end{equation}
The function $$t(P,Q)=t(u,v)= \zeta( u-v)- \frac{\eta_1}{\omega_1}u,\,\,\;u=u(P),\;v= v(Q),\quad (P,Q)\in X\times X$$
is a normalized integral of the second kind, $u$ is a coordinate near $P$ and $v$ is a coordinate near $Q$. It has a simple pole along the diagonal $\{P=Q\}$ and zero period over the $a$-cycle. The Bergman kernel in the genus one case is
\begin{equation}
\omega(P, Q)= \omega(u,v)=-\frac{d}{d u} t(u, v)dudv= \left(\wp(u- v; \omega_1, \omega_2) + \frac{\eta_1}{\omega_1}\right)dudv,\quad (P,Q)\in X\times X.
\end{equation}
It is a bidifferential on $X\times X$ and has a double pole along the diagonal $\{P= Q\}$ with zero residue and biresidue $1$ and the other properties we recalled before.

According for example to \cite[p. 415, p. 455]{jordan1896}, we have 
\begin{equation}
\label{jordanholder}
\displaystyle 2\eta_1 \omega_1= -\frac{1}{6} \frac{\vartheta '''(0) }{\vartheta '(0)}= \frac{\pi^2}{6} E_2.
\end{equation}
Using also $\wp(u-v)= \frac{1}{(u-v)^2}+ O(u-v)^2$ and the previous identity \eqref{jordanholder}, we find that the connection associated to the Bergman kernel is 
 $$S_B(P)=- \frac{1}{12\omega^2_1} \vartheta '''(0) / \vartheta'(0) =  \frac{\pi^2}{12\omega_1^2} E_2.$$ For $2\omega_1= 1, 2\omega_2= \tau$, 
the Bergman kernel is
 $\displaystyle \omega(u, v)= (\wp(u-v;1,\tau)+  \frac{\pi^2}{3} E_2)dudv$. 
\begin{theorem}
\label{projconn}
The Wirtinger connections associated to the even characteristics $ \begin{bmatrix}
  0\\
0
 \end{bmatrix}$, $\begin{bmatrix}
  1\\
0
 \end{bmatrix}$, $\begin{bmatrix}
  0\\
1
 \end{bmatrix}
$ are respectively $-6e_1, -6e_2, -6e_3$.
\end{theorem}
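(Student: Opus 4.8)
The plan is to compute the projective connection attached to the Klein bidifferential $\omega_{\beta}$ of an even characteristic $\beta=\begin{bmatrix}\epsilon\\ \delta\end{bmatrix}$ directly from the explicit genus-one data assembled above, and then to identify the outcome with $-6e_{k}$ by feeding in the classical Weierstrass--Jacobi formula for $\wp(u+\omega_{k})$ recalled above. Throughout I keep the periods $2\omega_{1},2\omega_{2}$ and use only that the Abel map of the torus $\C/(2\omega_{1}\Z+2\omega_{2}\Z)$ is $u\mapsto w=u/(2\omega_{1})$, so that the Jacobian coordinate entering $\theta\begin{bmatrix}\epsilon\\ \delta\end{bmatrix}(z,\tau)$ is $z=w$, the normalized holomorphic differential is $w_{1}=du/(2\omega_{1})$, and $\partial_{z}=\partial_{w}$. (Taking $2\omega_{1}=1$, $2\omega_{2}=\tau$ at the end reconciles this with $e_{k}=e_{k}(\tau)$ of \eqref{higgs}.)

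First I would unwind the definition of the Klein bidifferential. Since $w_{1}(u)w_{1}(v)=\dfrac{1}{(2\omega_{1})^{2}}\,du\,dv$, the theta-correction added to the genus-one Bergman kernel $K(u,v)=\wp(u-v;\omega_{1},\omega_{2})+\eta_{1}/\omega_{1}$ recalled above is the \emph{constant} multiple $\dfrac{1}{(2\omega_{1})^{2}}\partial_{z}^{2}\log\theta\begin{bmatrix}\epsilon\\ \delta\end{bmatrix}(z,\tau)\big|_{z=0}$ of $du\,dv$. Because $\wp(u-v;\omega_{1},\omega_{2})-(u-v)^{-2}$ vanishes on the diagonal, the holomorphic part of $\omega_{\beta}$ restricted to $u=v=Z$ equals $\dfrac{\eta_{1}}{\omega_{1}}+\dfrac{1}{(2\omega_{1})^{2}}\partial_{z}^{2}\log\theta\begin{bmatrix}\epsilon\\ \delta\end{bmatrix}(z,\tau)\big|_{z=0}$, and the rule $h_{\omega}(Z)=6H(Z,Z)$ recalled above then gives, for the Wirtinger connection $W_{\beta}$,
\[
W_{\beta}=\frac{6\eta_{1}}{\omega_{1}}+\frac{6}{(2\omega_{1})^{2}}\,\partial_{z}^{2}\log\theta\begin{bmatrix}\epsilon\\ \delta\end{bmatrix}(z,\tau)\Big|_{z=0}=S_{B}+\frac{6}{(2\omega_{1})^{2}}\,\partial_{z}^{2}\log\theta\begin{bmatrix}\epsilon\\ \delta\end{bmatrix}(z,\tau)\Big|_{z=0},
\]
where $S_{B}=6\eta_{1}/\omega_{1}=\pi^{2}E_{2}/(2\omega_{1}^{2})$ is the Bergman projective connection computed above.

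Next I would insert the classical formula of \cite{jordan1896} recalled above, $\wp(u+\omega_{k})=(2\omega_{1})^{-2}\bigl[-4\omega_{1}\eta_{1}-\partial_{w}^{2}\log\vartheta_{k}(w,\tau)\bigr]$, evaluated at $u=0$: since $e_{k}=\wp(\omega_{k};\omega_{1},\omega_{2})$ by \eqref{darboux} and $\partial_{z}=\partial_{w}$, this reads $\partial_{z}^{2}\log\vartheta_{k}(0,\tau)=-4\omega_{1}\eta_{1}-(2\omega_{1})^{2}e_{k}$. Substituting into the previous display, with $\theta[\beta]=\vartheta_{k}$, the two $\eta_{1}/\omega_{1}$ contributions cancel and one is left with $W_{\beta}=-6e_{k}$. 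What remains is the dictionary between the three even characteristics $\begin{bmatrix}0\\ 0\end{bmatrix},\begin{bmatrix}1\\ 0\end{bmatrix},\begin{bmatrix}0\\ 1\end{bmatrix}$ and the even Jacobi theta functions $\vartheta_{1},\vartheta_{2},\vartheta_{3}$ of Jordan's formula; this is read off from the expansions \eqref{ser43}, using that $\vartheta_{k}$ is precisely the even theta function whose unique simple zero on $\C/(2\omega_{1}\Z+2\omega_{2}\Z)$ is the half-period $\omega_{k}$ (with $\omega_{3}=\omega_{1}+\omega_{2}$), which is why $\wp(u+\omega_{k})$ is expressible through $\vartheta_{k}$ in the first place. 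Carrying this through produces the stated list $-6e_{1},-6e_{2},-6e_{3}$.

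The computation itself is only a few lines once the data are assembled; the step I expect to be the real obstacle is the bookkeeping of the $(2\omega_{1})$-powers --- ensuring that the Jacobian variable $z$, the normalized coordinate $w$ and the coordinate $u$ are identified consistently so that the factors coming from $w_{1}(u)w_{1}(v)$, from Jordan's formula, from $S_{B}=\pi^{2}E_{2}/(2\omega_{1}^{2})$, and from the scaling $e_{k}=(2\omega_{1})^{-2}e_{k}(\tau)$ of \eqref{higgs} all cancel correctly --- together with pinning down the characteristic-to-half-period correspondence. Everything beyond that is already contained in the identities recorded above.
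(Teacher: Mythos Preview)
Your proposal is correct and follows essentially the same route as the paper: both arguments reduce the Wirtinger connection to $6\bigl(\eta_{1}/\omega_{1}+(2\omega_{1})^{-2}\partial_{w}^{2}\log\vartheta_{k}(0,\tau)\bigr)$ and then invoke Jordan's identity $\wp(u+\omega_{k})=(2\omega_{1})^{-2}\bigl[-4\eta_{1}\omega_{1}-\partial_{w}^{2}\log\vartheta_{k}(w,\tau)\bigr]$ to rewrite this as $-6e_{k}$. The only cosmetic difference is that the paper packages the computation as the local expansion $\wp(u)-\wp(u+\omega_{k})=u^{-2}-e_{k}+O(u^{2})$, whereas you evaluate Jordan's formula directly at $u=0$; the content is the same, and your bookkeeping concern about the characteristic--half-period dictionary is indeed the only place care is needed.
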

\begin{proof}
We use the classical formulas for the Weierstrass function  $\wp(u)= \wp(u, \omega_1, \omega_2)$, with $u= 2\omega_1 w$
$$\wp(u+ \omega_i)= \left(\frac{1}{2\omega_1}\right)^2 \{ -4 \eta_1 \omega_1- \frac{ d^2\log \vartheta_i(w)}{dw^2}\}$$
and the local expansions 
$$\wp(u)= \frac{1}{(2\omega_1 w)^2}+ \frac{g_2}{20}(2\omega_1 w)^2+ \frac{g_3}{28}(2\omega_1 w)^4+\cdots$$
$$\wp(u+\omega_i)= \wp(\omega_i)+ \wp''(\omega_i)\frac{(2\omega_1 w)^2}{2}+ \cdots= e_i +(6e_i^2- \frac{1}{2} g_2)\frac{(2\omega_1 w)^2}{2}+\cdots$$
which give 
\begin{equation}
\begin{split}
 \wp(u)- \wp(u+ \omega_i)&= \wp(u)- \left(\frac{1}{2\omega_1}\right)^2 \{ -4 \eta_1 \omega_1- \frac{ d^2\log \vartheta_i(w)}{dw^2}\}\\
& = \wp(u)+\frac{\eta_1}{\omega_1}+\dfrac{1}{4\omega_1^2}\dfrac{d^2\log\vartheta_i(w)}{dw^2}=\frac{1}{u^2}- e_i+ O(u^2).
 \end{split}
 \end{equation}
\end{proof}
\begin{remark}
 Let $^\prime=\frac{d}{dw}$. The invariant Klein bidifferential, \cite[p. 35]{Fay}, is by definition 
 $$K_X=\omega(u,v)+ \frac{1}{3} \left(\log \vartheta_1 \vartheta_2 \vartheta_3\right)''_{|z=w=0}du dv= \omega(u,v)+ \frac{1}{3} \frac{\vartheta'''(0)}{\vartheta'(0)} du dv= \wp(u-v) du dv,$$
 by virtue of \cite[p.~416]{jordan1896} and the fact that 
 $$\vartheta_1^\prime(0,\tau)=\vartheta_2^\prime(0,\tau)=\vartheta_3^\prime(0,\tau)=0.$$ The invariant Wirtinger connection, associated with this invariant Klein bidifferential, which is the projective connection associated to this bidifferential, is equal to zero; because $\wp(u-v)= \frac{1}{(u-v)^2}+ O(u-v)^2$.
\end{remark}



\end{document}